\tikzstyle{v} = [circle, draw, inner sep=2pt, minimum size=3pt, fill=black]
\tikzset{square matrix/.style={
    matrix of nodes,
    column sep=-\pgflinewidth, row sep=-\pgflinewidth,
    nodes={draw,
      minimum height=4.5pt,
      anchor=center,
      text width=4.5pt,
      align=center,
      inner sep=0pt
    },
  },
  square matrix/.default=1.2cm
}
\newtheorem{Theorem}{Theorem}[section]
\newtheorem{Definition}[Theorem]{Definition}
\newtheorem{Lemma}[Theorem]{Lemma}
\newtheorem{Proposition}[Theorem]{Proposition}
\newtheorem{Corollary}[Theorem]{Corollary}
\newtheorem{Remark}[Theorem]{Remark}
\newtheorem{Example}[Theorem]{Example}
\DeclareMathOperator{\leaf}{leaf}
\DeclareMathOperator{\diam}{diam}
\DeclareMathOperator{\cor}{cor}
\begin{document}

\title{Domination number of middle graphs}

\author[F. Kazemnejad]{Farshad Kazemnejad}
\address{Farshad Kazemnejad, Department of Mathematics, School of Sciences, Ilam University, 
P.O.Box 69315-516, Ilam, Iran.}
\email{kazemnejad.farshad@gmail.com}
\author[B. Pahlavsay]{Behnaz Pahlavsay}
\address{Behnaz Pahlavsay, Department of Mathematics, Hokkaido University, Kita 10, Nishi 8, Kita-Ku, Sapporo 060-0810, Japan.}
\email{pahlavsay@math.sci.hokudai.ac.jp}
\author[E. Palezzato]{Elisa Palezzato}
\address{Elisa Palezzato, Department of Mathematics, Hokkaido University, Kita 10, Nishi 8, Kita-Ku, Sapporo 060-0810, Japan.}
\email{palezzato@math.sci.hokudai.ac.jp}
\author[M. Torielli]{Michele Torielli}
\address{Michele Torielli, Department of Mathematics, GI-CoRE GSB, Hokkaido University, Kita 10, Nishi 8, Kita-Ku, Sapporo 060-0810, Japan.}
\email{torielli@math.sci.hokudai.ac.jp}

\date{\today}

\begin{abstract}
 In this paper, we study the domination number of middle graphs. Indeed, we obtain tight bounds for this number in terms of the order of the graph $G$. We also compute the domination number of some families of graphs such as star graphs, double start graphs, path graphs, cycle graphs, wheel graphs, complete graphs, complete bipartite graphs and friendship graphs, explicitly. Moreover, some Nordhaus-Gaddum-like relations are presented for the domination number of middle graphs.
\\[0.2em]

\noindent
Keywords: Domination number, Middle graph, Nordhaus-Gaddum-like relation.
\\[0.2em]

\noindent 
\end{abstract}
\maketitle
\section{\bf Introduction}

The notion of domination and its many generalizations have been intensively studied in graph theory and the literature on this subject is vast, see for example \cite{HHS5}, \cite{HHS6}, \cite{HeYe13}, \cite{3totdominrook} and \cite{dominLatin}. Throughout this paper, we use standard notation for graphs and we assume that each graph is non-empty, finite, undirected and simple. We refer to \cite{bondy2008graph} as a general reference on graph theory.


Let $G$ be a graph with vertex set $V(G)$ of \emph{order}
$n$ and edge set $E(G)$ of \emph{size} $m$.
The \emph{open neighborhood} of a
vertex $v\in V(G)$ is $N_{G}(v)=\{u\in V(G)~|~ uv\in E(G)\}$ and, similarly, the \emph{closed neighborhood} of a
vertex $v\in V(G)$ is $N_{G}[v]=N_{G}(v)\cup \{v\}$. The \emph{degree} of a
vertex $v\in V(G)$ is defined as $d_G(v)=\vert N_{G}(v) \vert $. 
The \emph{distance} $d_G(v,w)$ in $G$ of two vertices $v,w\in V(G)$ is the length of the shortest path connecting $v$ and $w$.
The \emph{diameter} of $G$, denoted $\diam(G)$, is the shortest distance between any two vertices in $G$.

\begin{Definition} A \emph{dominating set}, briefly DS,  of a graph $G$ is a set $S\subseteq V(G)$
such that  $N_G[v]\cap
S\neq \emptyset$, for any vertex $v\in V(G)$. The \emph{domination number} of $G$ is the minimum cardinality of a DS of $G$ and it is denoted by $\gamma(G)$. 
\end{Definition}

For any non-empty $S\subseteq V(G)$, we denote by $G[S]$ the subgraph of $G$ induced on the vertex set $S$. For any $v\in V(G)$, we denote by $G\setminus v$ the subgraph of $G$ induced on the vertex set $V(G)\setminus \{v\}$. Given two graphs $G$ and $H$ with distinct vertices, we can construct a new graph $G\cup H$ by imposing $V(G\cup H)=V(G)\cup V(H)$ and $E(G\cup H)=E(G)\cup E(H)$. 

Given a graph $G$, its \emph{complement}, denoted by $\overline{G}$, is a graph with vertex set $V(G)$ such that for every two vertices $v$ and $w$, $vw\in E(\overline{G})$\ if and only if $vw\not\in E(G)$. 

The \emph{line graph} $L(G)$ of a graph $G$ is the graph with vertex set $E(G)$, where vertices $x$ and $y$ are adjacent in $L(G)$ if and only if the corresponding edges $x$ and $y$ share a common vertex in $G$.

The concept of middle graph $M(G)$ of a graph $G$ was introduced by Hamada and Yoshimura in \cite{HamYos} as an intersection graph on the vertex set of $G$.

\begin{Definition}
 The middle graph $M(G)$ of a graph $G$ is the graph whose vertex set is $V(G)\cup E(G)$ and two vertices $x, y$ in the vertex set of $M(G)$ are adjacent in $M(G)$ in case one the following holds
 \begin{enumerate}
 \item $x, y\in E(G)$ and $x, y$ are adjacent in $G$;
 \item $x\in V (G)$, $y\in E(G)$, and $x, y$ are incident in $G$. 
 \end{enumerate}
 \end{Definition}
 
In other words, the middle graph $M(G)$ of a graph $G$ of order $n$ and size $m$ is a graph of order $n+m$ and size $2m+|E(L(G))| $ which is obtained by subdividing each edge of $G$ exactly once and joining all the adjacent edges of $G$ in $ M(G)$. 
It is obvious that $M(G)$ always contains the line graph $L(G)$ as an induced subgraphs.

In order to avoid confusion throughout the paper, we fix a ``standard'' notation for the vertex set and the edge set of the middle graph $M(G)$. Assume $V(G)=\{v_1,v_2,\dots, v_n\}$, then we set $V(M(G))=V(G)\cup \mathcal{M}$, where $\mathcal{M}=\{m_{ij}~|~ v_iv_j\in E(G)\}$ and $E(M(G))=\{v_im_{ij},v_jm_{ij}~|~ v_iv_j\in E(G)\}\cup E(L(G)) $.


The paper proceeds as follows. In Section 2, first we present some upper and lower bounds for $\gamma(M(G))$ in terms of the order of the graph $G$, we relate the domination number of $M(G)$ to the edge cover number of $G$, and then we compute the domination number of the middle graphs of the corona, $2$-corona and the join of graphs. In Section 3, we compute explicitly $\gamma(M(G))$ for several known families of graphs: star graphs, double star graphs, path graphs, cycle graphs, wheel graphs, complete graphs, complete bipartite graphs and friendship graphs. Finally, in the last Section we present some Nordhaus-Gaddum like relations for the domination number of middle graphs.

\section{\bf Domination number of middle graphs}
We start our study of domination numbers of middle graph with two key Lemmas.

\begin{Lemma}\label{lemma:dominationisalledges}
Let $G$ be a graph of order $n\ge2$ without isolated vertices and $S$ a dominating set of $M(G)$. Then there exists $S'\subseteq E(G)$ a dominating set of $M(G)$ with $|S'|\le|S|$.
\end{Lemma}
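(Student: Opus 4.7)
The plan is to modify $S$ by replacing every element of $S$ that lies in $V(G)$ with a carefully chosen element of $\mathcal{M}$, in such a way that the new set still dominates $M(G)$ and its cardinality does not increase.

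The key observation is a closed-neighborhood inclusion in $M(G)$. From the definition of $M(G)$, for every $v_i\in V(G)$ we have
\[
N_{M(G)}[v_i]=\{v_i\}\cup\{m_{ik}~|~v_iv_k\in E(G)\},
\]
while for every $m_{ij}\in \mathcal{M}$ we have
\[
N_{M(G)}[m_{ij}]=\{m_{ij},v_i,v_j\}\cup\{m_{ik}~|~v_iv_k\in E(G),k\neq j\}\cup\{m_{jk}~|~v_jv_k\in E(G),k\neq i\}.
\]
Comparing these, I would verify in one line that $N_{M(G)}[v_i]\subseteq N_{M(G)}[m_{ij}]$ for every edge $v_iv_j\in E(G)$. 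This is the heart of the argument.

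Next, since $G$ has no isolated vertices, for every $v_i\in V(G)$ we can pick some edge $v_iv_{j(i)}\in E(G)$, and hence a vertex $m_{i,j(i)}\in\mathcal{M}$. I would then define
\[
S'=(S\cap\mathcal{M})\cup\{m_{i,j(i)}~|~v_i\in S\cap V(G)\}\subseteq\mathcal{M}.
\]
By construction, each element of $S\cap V(G)$ contributes at most one element to $S'$, so $|S'|\le|S\cap\mathcal{M}|+|S\cap V(G)|=|S|$, the inequality being possibly strict if distinct $v_i$'s get assigned the same $m_{i,j(i)}$ or if $m_{i,j(i)}$ already belonged to $S\cap\mathcal{M}$.

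Finally, I would verify that $S'$ dominates $M(G)$. Let $u\in V(M(G))$. Since $S$ dominates $M(G)$, there exists $w\in S$ with $u\in N_{M(G)}[w]$. If $w\in S\cap\mathcal{M}$ then $w\in S'$ and we are done. Otherwise $w=v_i$ for some $v_i\in S\cap V(G)$, and the inclusion $N_{M(G)}[v_i]\subseteq N_{M(G)}[m_{i,j(i)}]$ yields $u\in N_{M(G)}[m_{i,j(i)}]$ with $m_{i,j(i)}\in S'$. This completes the plan.

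The only mildly delicate point is the closed-neighborhood inclusion, which must be checked separately for $u=v_i$, for $u=m_{ij}$, and for $u=m_{ik}$ with $k\neq j$; the no-isolated-vertex hypothesis is used precisely to ensure that a valid index $j(i)$ exists for every $v_i\in V(G)$.
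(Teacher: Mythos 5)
Your proposal is correct and takes essentially the same approach as the paper: both replace each vertex $v_i\in S\cap V(G)$ by an incident edge-vertex $m_{ij}$, using the fact that $N_{M(G)}[v_i]\subseteq N_{M(G)}[m_{ij}]$. The only difference is cosmetic --- you perform all replacements simultaneously and state the closed-neighborhood inclusion explicitly, whereas the paper does the swaps one at a time and leaves that inclusion implicit.
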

\begin{proof} If $S\subseteq E(G)$, then take $S'=S$. On the other hand, assume that there exists $v\in S\cap V(G)$. If all incident edges to $v$ are already in $S$, then take $S_1=S\setminus\{v\}$. Otherwise, let $e\in E(G)\setminus S$ an edge incident to $v$. Then consider $S_1=(S\cup\{e\})\setminus\{v\}$. By construction, $S_1$ is again a dominating set of $M(G)$. Since $S$ is finite, then this process terminates after a finite number of steps, and hence we obtain the described $S'$.

\end{proof}

\begin{Lemma}\label{lemma:dominationdeletionvertex}
Let $G$ be a graph of order $n\ge2$ and $v\in V(G)$. Then
$$\gamma(M(G\setminus v))\le \gamma(M(G)) \le \gamma(M(G\setminus v))+1.$$ 
\end{Lemma}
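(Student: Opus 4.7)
The plan is to prove the two inequalities separately by explicit construction of dominating sets. The key structural observation is that $M(G\setminus v)$ is the induced subgraph of $M(G)$ on the vertex set $V(M(G))\setminus(\{v\}\cup\{m_{vj}\mid v_j\in N_G(v)\})$: the extra vertices $M(G)$ has over $M(G\setminus v)$ are exactly $v$ itself and the subdivision vertices corresponding to edges incident to $v$, and all other adjacencies are inherited from $M(G)$. The closed neighborhoods I will need are $N_{M(G)}[v]=\{v\}\cup\{m_{vj}\mid v_j\in N_G(v)\}$ and $N_{M(G)}[m_{vj}]=\{m_{vj},v,v_j\}\cup\{m_{vk}\mid k\ne j,\; vv_k\in E(G)\}\cup\{m_{jk}\mid k\ne v,\; v_jv_k\in E(G)\}$.

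For the lower bound $\gamma(M(G\setminus v))\le \gamma(M(G))$, I would start from a minimum dominating set $S$ of $M(G)$ and define $S^*\subseteq V(M(G\setminus v))$ by deleting $v$ from $S$ (if present) and replacing each $m_{vj}\in S$ by the vertex $v_j$; then $|S^*|\le|S|$ by construction. To check that $S^*$ dominates $M(G\setminus v)$, I would fix any $u\in V(M(G\setminus v))$ and let $s\in S$ dominate $u$ in $M(G)$. If $s\in V(M(G\setminus v))$, then $s$ survives in $S^*$ and still dominates $u$, because $M(G\setminus v)$ is an induced subgraph of $M(G)$. The case $s=v$ is vacuous, since $N_{M(G)}[v]$ is disjoint from $V(M(G\setminus v))$. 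In the remaining case $s=m_{vj}$, the listing of $N_{M(G)}[m_{vj}]$ above forces $u$ to be either $v_j$ or some $m_{jk}$ with $v_jv_k\in E(G\setminus v)$, and in both subcases the replacement $v_j\in S^*$ dominates $u$ in $M(G\setminus v)$.

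For the upper bound $\gamma(M(G))\le \gamma(M(G\setminus v))+1$, I would take a minimum dominating set $S$ of $M(G\setminus v)$ and add a single vertex to cover the extra vertices $\{v\}\cup\{m_{vj}\mid v_j\in N_G(v)\}$, noting that the induced-subgraph relation guarantees $S$ already dominates all of $V(M(G\setminus v))$ inside $M(G)$. If $v$ is isolated in $G$, then no $m_{vj}$ exists and $v$ is isolated in $M(G)$, so $S'=S\cup\{v\}$ works. Otherwise, choosing any $v_k\in N_G(v)$, the vertex $m_{vk}$ is adjacent in $M(G)$ both to $v$ and to every $m_{vj}$ with $j\ne k$ (since $vv_k$ and $vv_j$ share the vertex $v$), so $S'=S\cup\{m_{vk}\}$ dominates $M(G)$.

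I expect the main subtlety to be the lower-bound direction: we must confirm that swapping an edge-vertex $m_{vj}$ (which may dominate many vertices in $M(G)$) for the incidence-vertex $v_j$ loses no domination inside $M(G\setminus v)$. This rests precisely on the computation of $N_{M(G)}[m_{vj}]\cap V(M(G\setminus v))=\{v_j\}\cup\{m_{jk}\mid k\ne v,\; v_jv_k\in E(G)\}$, every element of which is either $v_j$ itself or a neighbor of $v_j$ in $M(G\setminus v)$.
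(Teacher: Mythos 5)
Your proof is correct and rests on the same key idea as the paper's: for the lower bound you replace each subdivision vertex $m_{vj}\in S$ by the other endpoint $v_j$ (the paper phrases this as replacing each edge $w_iv\in S$ by $w_i$ after first normalizing $S\subseteq E(G)$ via Lemma~\ref{lemma:dominationisalledges}), and for the upper bound you add a single vertex covering $\{v\}\cup\{m_{vj}\mid v_j\in N_G(v)\}$. The only substantive difference is that you work with an arbitrary minimum dominating set and dispose of the case $s=v$ directly by noting $N_{M(G)}[v]$ is disjoint from $V(M(G\setminus v))$, which lets you bypass the appeal to Lemma~\ref{lemma:dominationisalledges} and the paper's separate treatment of isolated vertices.
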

\begin{proof} For any dominating set $S$ of $M(G\setminus v)$, we have that $S\cup\{v\}$ is a dominating set of $M(G)$, and hence $\gamma(M(G)) \le \gamma(M(G\setminus v))+1$.

On the other hand, let $S$ be a minimal dominating set of $M(G)$. If $v$ is an isolated vertex, then $v\in S$ and $S\setminus\{v\}$ is a minimal dominating set of $M(G\setminus v)$. This implies that in this case $\gamma(M(G))= \gamma(M(G\setminus v))+1$. Assume that $G$ has no isolated vertices. By Lemma~\ref{lemma:dominationisalledges}, we can assume that $S\subseteq E(G)$. Consider $S_v=N_{M(G)}(v)\cap S$. Since $S$ is a dominating set, $|S_v|\ge 1$. Assume $S_v=\{e_1,\dots, e_k\}$. For any $1\le i\le k$, $e_i$ is an edge of $G$ of the form $w_iv$. Define $S'=(S\setminus S_v)\cup\{w_1,\dots, w_k\}$. By construction $S'$ is a dominating set of $M(G\setminus v)$ with $|S'|= |S|$, and hence $\gamma(M(G\setminus v))\le \gamma(M(G))$.

\end{proof}

We start our study of the domination number by describing a lower and an upper bound for the domination number of the middle graph of a tree.

\begin{Theorem}\label{theo:mindomintree}
Let $T$ be a tree with $n\ge2$ vertices. Then $$\lceil\frac{n}{2}\rceil\le\gamma(M(T))\le n-1.$$
\end{Theorem}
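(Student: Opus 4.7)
The plan is to prove each inequality separately; both are essentially direct once we invoke Lemma~\ref{lemma:dominationisalledges}.

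For the upper bound $\gamma(M(T))\le n-1$, I would exhibit an explicit dominating set. Take $S=\mathcal{M}$, the set of all ``edge-vertices'' in $M(T)$. Every $m_{ij}\in\mathcal{M}$ is trivially dominated (it lies in $S$), and every $v_i\in V(T)\subseteq V(M(T))$ is adjacent in $M(T)$ to each $m_{ij}$ with $v_iv_j\in E(T)$; since $T$ is connected with $n\ge2$ vertices, $v_i$ has at least one incident edge, so $v_i$ is dominated. Because $T$ is a tree, $|\mathcal{M}|=|E(T)|=n-1$, giving the desired bound.

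For the lower bound $\gamma(M(T))\ge\lceil n/2\rceil$, let $S$ be a minimum dominating set of $M(T)$. Since $T$ is connected with $n\ge2$ it has no isolated vertices, so by Lemma~\ref{lemma:dominationisalledges} I may assume $S\subseteq\mathcal{M}$, i.e.\ $S$ corresponds to a set of edges of $T$. The key observation is that, when $S\subseteq\mathcal{M}$, the only neighbors of a vertex $v_i\in V(T)$ inside $S$ are edge-vertices $m_{ij}$ with $v_iv_j$ incident to $v_i$. Hence every $v_i\in V(T)$ must be an endpoint of at least one edge in $S$; in other words, the edges of $T$ represented by $S$ form an edge cover of $T$. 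Since each edge covers exactly two vertices, any edge cover of $T$ has cardinality at least $\lceil n/2\rceil$, yielding $|S|\ge\lceil n/2\rceil$.

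There is no real obstacle here: the upper bound is a one-line verification using the tree identity $|E(T)|=n-1$, and the lower bound is a counting argument after reducing to edge-dominating sets via Lemma~\ref{lemma:dominationisalledges}. The only subtlety worth stressing in the write-up is why, once $S\subseteq\mathcal{M}$, dominating every vertex of $T$ forces $S$ to be an edge cover---this is where the structure of $M(T)$ (namely that $N_{M(T)}(v_i)\cap\mathcal{M}$ consists exactly of the $m_{ij}$ with $v_iv_j\in E(T)$) is used.
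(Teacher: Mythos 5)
Your proposal is correct and follows essentially the same route as the paper: the upper bound comes from the fact that Lemma~\ref{lemma:dominationisalledges} lets one work with dominating sets inside $E(T)$, which has only $n-1$ elements, and the lower bound is the same counting argument that a set of fewer than $\lceil\frac{n}{2}\rceil$ edges leaves some vertex of $T$ uncovered (the paper phrases this contrapositively, you phrase it via edge covers, but it is the same observation).
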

\begin{proof} Let  $S$ be a dominating set of $M(T)$. By Lemma~\ref{lemma:dominationisalledges}, we can assume that $S\subseteq E(T)$. This implies that $|S|\le|E(T)|=n-1$, proving the second inequality.

Assume that there exists $S$ a dominating set of $M(T)$ with $|S|<\lceil\frac{n}{2}\rceil$. By Lemma~\ref{lemma:dominationisalledges}, we can assume that $S\subseteq E(T)$, this implies that $V_S$ the set of vertices of $T$ that are incident to at least one edge in $S$ satisfies $|V_S|<n$, and hence there exists at least one $v\in V(T)\subseteq V(M(T))$ not dominated by $S$, proving the first inequality.

\end{proof}

If we denote by $\leaf(T)=\{v\in V(T)~|~d_T(v)=1\}$ the set of leaves of a tree $T$, then we have the following result.
\begin{Proposition}\label{prop:mindomintreeleaf}
Let $T$ be a tree with $n\ge2$ vertices. Then $$\gamma(M(T))\ge |\leaf(T)|.$$
\end{Proposition}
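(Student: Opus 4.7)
The plan is to combine two structural facts about $M(T)$: first, every leaf $v$ of $T$ has only one neighbor in $M(T)$ (namely the edge-vertex corresponding to its unique incident edge); and second, by Lemma~\ref{lemma:dominationisalledges} we may assume our dominating set consists entirely of edge-vertices. These two facts together force a distinct edge-vertex into the dominating set for each leaf.

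More concretely, I would start with a minimum dominating set $S$ of $M(T)$, so $|S|=\gamma(M(T))$. Since $T$ is a tree on $n\ge 2$ vertices it has no isolated vertex, so Lemma~\ref{lemma:dominationisalledges} lets me replace $S$ by a dominating set $S'\subseteq \mathcal{M}$ with $|S'|\le|S|$. Next, for any leaf $v\in\leaf(T)$, let $e_v=vw$ be its unique incident edge in $T$. By the definition of $M(T)$, the only neighbors of $v$ in $M(T)$ are edge-vertices $m_{ij}$ with $v_i v_j$ incident to $v$ in $T$; since $d_T(v)=1$, the sole such neighbor is $m_{e_v}$. Because $v\notin S'$ (as $S'\subseteq\mathcal{M}$), domination of $v$ forces $m_{e_v}\in S'$.

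Finally, I would verify that the assignment $\varphi\colon \leaf(T)\to S'$ sending $v\mapsto m_{e_v}$ is injective: two distinct leaves $v,v'$ have incident edges $e_v,e_{v'}$ that can only coincide if $e_v=vv'$ with both endpoints of degree $1$, which happens only when $T=K_2$. Excluding that degenerate case, $\varphi$ is injective and therefore
\[
|\leaf(T)| \;\le\; |\varphi(\leaf(T))| \;\le\; |S'| \;\le\; |S| \;=\; \gamma(M(T)),
\]
which is the desired bound.

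The only real subtlety is the injectivity of $\varphi$, which rests on the trivial but easily overlooked observation that two different leaves of a tree cannot share their incident edge unless the whole tree is $K_2$; other than handling this corner, the argument is a direct consequence of the reduction lemma and the degree count at a leaf.
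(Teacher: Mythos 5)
Your argument is correct and is essentially the paper's: both proofs reduce to the observation that distinct leaves of $T$ have pairwise disjoint closed neighborhoods in $M(T)$, so a dominating set must contain a separate element for each leaf. The paper states this disjointness directly for the sets $N_{M(T)}[v_i]$ and does not pass through Lemma~\ref{lemma:dominationisalledges}; your detour through that lemma is harmless (it preserves $|S'|\le|S|$) but unnecessary, since for a leaf $v$ the closed neighborhood $N_{M(T)}[v]=\{v,m_{e_v}\}$ is already private to $v$ whether or not the dominating set contains vertices of $V(T)$. The one place where you are more careful than the paper is the case $T=K_2$: there the two leaves share their unique incident edge, your map $\varphi$ fails to be injective, and in fact the proposition itself fails, since $M(K_2)\cong P_3$ has domination number $1$ while $|\leaf(K_2)|=2$. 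The paper's assertion that $N_{M(T)}[v_i]\cap N_{M(T)}[v_j]=\emptyset$ for distinct leaves is false in exactly this case, so the hypothesis $n\ge 2$ should really read $n\ge 3$. Your ``exclusion'' of the degenerate case is therefore not a gap in your reasoning but a genuine, if minor, correction to the statement that the paper overlooks.
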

\begin{proof} To fix the notation, assume $\leaf(T)=\{v_1,\dots,v_k\}$, for some $k\le n$, and let $S$ be a dominating set of $M(T)$. Then, for each $i=1,\dots, k$, $S\cap N_{M(T)}[v_i]\ne\emptyset$. Since, if $i\ne j$, then $N_{M(T)}[v_j]\cap N_{M(T)}[v_i]=\emptyset$, we have that $|S|\ge k$. This implies that $\gamma(M(T))\ge k= |\leaf(T)|.$

\end{proof}

If we specialize the class of trees that we are considering, we obtain an exact value for the domination number. 
\begin{Theorem}\label{theomindomintreediam3}
Let $T$ be a tree of order $n\geq 4$ with $\diam(T)=3$. Then $$\gamma(M(T))=n-2.$$
\end{Theorem}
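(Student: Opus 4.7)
The plan is to use the structural rigidity of trees of diameter~$3$, which are precisely the \emph{double stars}, together with Proposition~\ref{prop:mindomintreeleaf} for the lower bound and an explicit construction for the upper bound.

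First, I would recall (or justify in one line) that a tree $T$ with $\diam(T)=3$ consists of two adjacent ``center'' vertices $u,w$, where $u$ has $p\ge 1$ pendant leaves $u_1,\dots,u_p$ attached to it and $w$ has $q\ge 1$ pendant leaves $w_1,\dots,w_q$ attached to it, with $n=p+q+2$. Consequently $|\leaf(T)|=p+q=n-2$, and Proposition~\ref{prop:mindomintreeleaf} immediately gives
$$\gamma(M(T))\ge |\leaf(T)|=n-2,$$
which settles the lower bound without further work.

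For the matching upper bound, I would exhibit a dominating set of $M(T)$ of size exactly $n-2$ by taking the set of all ``leaf-edge'' vertices of $M(T)$:
$$S:=\{m_{u,u_i}\mid 1\le i\le p\}\cup\{m_{w,w_j}\mid 1\le j\le q\}.$$
Clearly $|S|=p+q=n-2$. It remains to verify that $S$ dominates every vertex of $M(T)$. Each leaf $u_i$ (resp.\ $w_j$) is adjacent in $M(T)$ to $m_{u,u_i}\in S$ (resp.\ $m_{w,w_j}\in S$); the center $u$ is adjacent to every $m_{u,u_i}\in S$ (using $p\ge1$) and similarly for $w$; every element of $S$ dominates itself; finally the only vertex left to check is the central edge-vertex $m_{u,w}$, and since it corresponds to an edge of $T$ sharing the endpoint $u$ with each $uu_i$ and the endpoint $w$ with each $ww_j$, it is adjacent in the line subgraph $L(T)\subseteq M(T)$ to every vertex of $S$.

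Combining the two bounds yields $\gamma(M(T))=n-2$. The only substantive step is the domination check for $m_{u,w}$, and this is precisely where the presence of $L(G)$ as an induced subgraph of $M(G)$ is used; aside from that, the argument is essentially a one-line structural observation plus the previous proposition.
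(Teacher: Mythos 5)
Your proof is correct and follows essentially the same route as the paper: both identify diameter-$3$ trees as double stars, obtain the lower bound $n-2$ from the pairwise disjointness of the closed neighborhoods of the leaves in $M(T)$ (which is exactly the content of Proposition~\ref{prop:mindomintreeleaf} that you cite), and establish the upper bound with the identical dominating set consisting of all pendant-edge vertices. Your verification that $m_{u,w}$ is dominated via adjacency in $L(T)$ is a slightly more explicit write-up of a check the paper leaves implicit.
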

\begin{proof} 
Since by assumption $\diam(T)=3$, then $T$ is a tree which is obtained by joining with an extra edge the central vertex $v_{n-1}$ of a tree $K_{1,p}$ and the central vertex $v_n$ of a tree $K_{1,q}$ where $p+q=n-2$. 

Let $\leaf(K_{1,p})=\{v_i~|~1\le i\le p\}$, $\leaf(K_{1,q})=\{v_i~|~p+1\le i\le p+q\}$ and $\leaf(G)=\leaf(K_{1,p})\cup\leaf(K_{1,q})$ be the sets of leaves of $K_{1,p}$, $K_{1,q}$ and $G$, respectively. Obviously, $|\leaf(G)|=n-2$ and $V(T)=\leaf(G)\cup \{v_{n-1},v_n\}$. 

Let $S$ be a dominating set of $M(T)$. Since $N_{M(G)}[v_i]\cap N_{M(G)}[v_j]= \emptyset$ for every distinct $v_i,v_j\in \leaf(G)$, we have $|S|\geq n-2$, and hence $\gamma(M(T))\ge n-2$.

On the other hand, if we consider $S=\{m_{i(n-1)}~|~1\leq i \leq p \}\cup\{m_{in}~|~p+1\leq i \leq n-2\}$, then $S$ is a dominating set of $M(T)$ with $|S|=n-2$, and hence $\gamma(M(T))\le n-2$. 

\end{proof}

Since path graphs are special type of trees, in general for a tree $T$, $\gamma(M(T))=n-2$ does not imply $\diam(T)=3$, as the next example shows.
\begin{Example} Consider the path graph $P_5$. Then $\diam(P_5)=4$ and $\gamma(M(P_5))=3=n-2$.
\end{Example}

Next we describe a lower and an upper bound for the domination number of the middle graph of an arbitrary graph.

\begin{Theorem}\label{theo:mindominconnectgraph}
Let $G$ be a graph with $n\ge2$ vertices. Assume $G$ has no isolated vertices,  then $$\lceil\frac{n}{2}\rceil\le\gamma(M(G))\le n-1.$$
\end{Theorem}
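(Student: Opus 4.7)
The plan is to follow the blueprint of Theorem~\ref{theo:mindomintree}: Lemma~\ref{lemma:dominationisalledges} already gives the lower bound in exactly the same way as for trees, because its proof never used acyclicity. So I would reproduce that argument verbatim: given any dominating set $S$ of $M(G)$, pass to $S'\subseteq E(G)$ with $|S'|\le|S|$; since $S'$ dominates every $v_i\in V(G)\subseteq V(M(G))$, and the only neighbors of $v_i$ in $M(G)$ that lie in $E(G)$ are the edges incident to $v_i$, the set $S'$ must be an edge cover of $G$. Each edge covers exactly two vertices, so $2|S'|\ge n$, hence $\gamma(M(G))=|S|\ge|S'|\ge\lceil n/2\rceil$.

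For the upper bound I cannot just invoke $|S|\le|E(G)|$ as in the tree proof, since $|E(G)|$ can far exceed $n-1$. Instead, I would exhibit an explicit dominating set of size $n-1$. Since $G$ has no isolated vertex and $n\ge2$, pick any edge $e=v_iv_j\in E(G)$, and consider
\[
S=\bigl(V(G)\setminus\{v_i,v_j\}\bigr)\cup\{m_{ij}\}.
\]
Then $|S|=n-1$. Checking that $S$ dominates $M(G)$ is routine: every $v_k\notin\{v_i,v_j\}$ dominates itself; $v_i$ and $v_j$ are dominated by $m_{ij}$; for any $m_{kl}\in\mathcal{M}$, either $\{k,l\}\not\subseteq\{i,j\}$, in which case $m_{kl}$ is adjacent to some $v_k\in S$, or else $\{k,l\}=\{i,j\}$ and $m_{kl}=m_{ij}\in S$.

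There is no real obstacle; the only subtlety is recognizing that one should not delete a single vertex from the trivial dominating set $V(G)$ (vertices in $V(G)$ are pairwise non-adjacent in $M(G)$, so removing one leaves it undominated), but rather delete \emph{both} endpoints of an edge $e$ and pay the cost of adding $m_e$ back. That single swap absorbs two deletions for the price of one insertion, yielding a dominating set of cardinality $n-1$.
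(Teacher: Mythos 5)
Your proof is correct, but your upper bound takes a genuinely different route from the paper's. The paper passes to a spanning tree $T$ of each connected component, invokes Theorem~\ref{theo:mindomintree} to get a dominating set of $M(T)$ of size at most $n-1$ consisting of edges (via Lemma~\ref{lemma:dominationisalledges}), and observes that this set still dominates $M(G)$; a byproduct, recorded in the subsequent Remark, is the sharper bound $\gamma(M(G))\le n-1-k$ when $G$ has $k+1$ components, and the dominating set produced lies entirely in $E(G)$, consistent with the edge-cover identity of Theorem~\ref{theo:mindomandcovernumber}. You instead exhibit the explicit set $S=(V(G)\setminus\{v_i,v_j\})\cup\{m_{ij}\}$ for a single edge $v_iv_j$; your case check is complete (the key point being that $\{k,l\}\subseteq\{i,j\}$ forces $m_{kl}=m_{ij}$), so this is a valid, more elementary and self-contained argument that bypasses the tree theorem entirely. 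What you give up is only cosmetic: your dominating set mixes vertices and edges, and you do not record the per-component refinement, though applying your swap once in each component would recover $n-1-k$ as well. Your lower bound is essentially the paper's argument, phrased directly as ``$S'$ is an edge cover, so $2|S'|\ge n$'' rather than by contradiction.
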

\begin{proof} Assume first that $G$ is connected. Let $T$ be a spanning tree of $G$, and $S$ a minimal dominating set of $M(T)$. By Theorem~\ref{theo:mindomintree}, $|S|\le n-1$, and by Lemma~\ref{lemma:dominationisalledges}, we can assume that $S\subseteq E(T)\subseteq E(G)$. By construction, $S$ is also a dominating set of $M(G)$, and hence $\gamma(M(G))\le n-1$.
Assume that $G$ has $k+1\ge2$ connected components. Then we can apply the previous argument to each connected component and obtain $\gamma(M(G))\le n-1-k$.

To prove the first inequality, assume that there exists $S$ a dominating set of $M(G)$ with $|S|<\lceil\frac{n}{2}\rceil$. By Lemma~\ref{lemma:dominationisalledges}, we can assume that $S\subseteq E(G)$, this implies that $V_S$ the set of vertices of $G$ that are incident to at least one edge in $S$ satisfies $|V_S|<n$, and hence there exists at least one $v\in V(G)\subseteq V(M(G))$ not dominated by $S$, proving the first inequality.
\end{proof}

\begin{Remark} The proof of Theorem~\ref{theo:mindominconnectgraph} shows that if $G$ is a graph with $k+1$ connected components, then $\gamma(M(G))\le n-1-k$.
\end{Remark}

\begin{Definition}
An \emph{edge cover} of a graph $G$ is a set of edges $S\subseteq E(G)$ such that every vertex of $G$ is incident to at least one edge in $S$.
The \emph{edge cover number} of $G$, denoted by $\rho(G)$, is the minimum cardinality of an edge cover of $G$.
\end{Definition}

Using Lemma~\ref{lemma:dominationisalledges}, we can relate the domination number of a middle graph and the edge cover number of the original graph.

\begin{Theorem}\label{theo:mindomandcovernumber}
Let $G$ be a graph of order $n\geq 2$ with no isolated vertex. Then $$\gamma(M(G))= \rho(G).$$
\end{Theorem}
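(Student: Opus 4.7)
The plan is to prove the equality by establishing both inequalities, using Lemma~\ref{lemma:dominationisalledges} as the bridge between dominating sets of $M(G)$ and edge sets of $G$.

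For the inequality $\gamma(M(G)) \le \rho(G)$, I would start from a minimum edge cover $S \subseteq E(G)$ of $G$ and show that the corresponding set $S' = \{m_{ij} \mid v_iv_j \in S\} \subseteq V(M(G))$ is a dominating set of $M(G)$. There are two kinds of vertices to check: first, every $v_i \in V(G) \subseteq V(M(G))$ is incident in $G$ to some edge $v_iv_j \in S$ (because $S$ is an edge cover), so $m_{ij} \in S'$ is adjacent to $v_i$ in $M(G)$ by the incidence rule. Second, for any $m_{kl} \in \mathcal{M}$ not already in $S'$, the vertex $v_k$ is incident to some edge $v_kv_j \in S$, and since $v_kv_l$ and $v_kv_j$ share the endpoint $v_k$, the vertices $m_{kl}$ and $m_{kj}$ are adjacent in $M(G)$ through the induced copy of $L(G)$. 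Hence $S'$ dominates $M(G)$ and $|S'| = |S| = \rho(G)$.

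For the reverse inequality $\rho(G) \le \gamma(M(G))$, I would take a minimum dominating set $S$ of $M(G)$ and invoke Lemma~\ref{lemma:dominationisalledges} (applicable because $G$ has no isolated vertex) to assume $S \subseteq \mathcal{M}$, i.e.\ $S$ corresponds to a set of edges of $G$. I then argue that this edge set is automatically an edge cover: for every $v_i \in V(G)$, the domination condition provides some $m_{jk} \in S$ adjacent to $v_i$ in $M(G)$, and the only way for $m_{jk}$ to be adjacent to $v_i$ is for $v_i$ to be an endpoint of the edge $v_jv_k$ in $G$. Thus the corresponding edge set covers every vertex of $G$, giving $\rho(G) \le |S| = \gamma(M(G))$.

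No step poses a serious obstacle; the only thing to keep track of is the interaction between the two types of edges in $M(G)$ (incidence edges $v_im_{ij}$ versus line-graph edges in $E(L(G))$), which is what makes the first direction work for the $m_{kl}$ vertices. The hypothesis that $G$ has no isolated vertex is needed both to ensure $\rho(G)$ is well defined and to apply Lemma~\ref{lemma:dominationisalledges}.
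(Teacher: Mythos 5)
Your proposal is correct and follows essentially the same route as the paper: both directions use Lemma~\ref{lemma:dominationisalledges} to reduce dominating sets to edge sets, show a minimum edge cover dominates both the $V(G)$-vertices (by incidence) and the $\mathcal{M}$-vertices (by line-graph adjacency at a covered endpoint), and conversely read off an edge cover from an edge-only dominating set. Your write-up is in fact slightly more explicit than the paper's in verifying domination of the $m_{kl}$ vertices.
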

\begin{proof} 
To fix the notation, assume $V(G)=\{v_1,\dots, v_n\}$. Let $S$ be a minimum edge cover of $G$. Then $N_{M(G)}(v_i)\cap S\neq \emptyset$ for all $1\leq i \leq n$. This implies that for all $1\leq i \leq n$, there exist an index $k\ne i$ such that $m_{ik}\in N_{M(G)}(v_i)\cap S$, and hence $ N_{M(G)}[m_{rs}]\cap S\neq \emptyset$ for every $m_{rs}\in V(M(G))$. This implies that $S$ is a dominating set of $M(G)$, and hence that  $\gamma(M(G))\leq \rho(G)$.

On the other hand, let $S$ be a minimal dominating set of $M(G)$. By Lemma~\ref{lemma:dominationisalledges}, we can assume that $S\subseteq E(G)$. Since $S$ dominates every vertex of $G$, we have $N_{M(G)}(v_i)\cap S\neq \emptyset$, for every $1\leq i \leq n$. This implies that $S$ is an edge cover of $G$, and hence $\gamma(M(G))\geq \rho(G)$.

\end{proof}

\begin{Definition}
The \emph{corona} $G\circ K_1$ (also denoted by $\cor(G)$) of a graph $G$ is the graph of order $2|V(G)|$ obtained from $G$ by adding a pendant edge to each vertex of $G$. The \emph{$2$-corona} $G\circ P_2$ of $G$ is the graph of order $3|V(G)|$ obtained from $G$ by attaching a path of length $2$ to each vertex of $G$ so that the resulting paths are vertex-disjoint.
\end{Definition}

\begin{Theorem}\label{theo:mindomincorona}
For any connected graph $G$ of order $n\geq 2$, $$\gamma(M(G\circ K_1))=n.$$
\end{Theorem}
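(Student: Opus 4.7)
My plan is to reduce the problem to an edge cover computation via Theorem~\ref{theo:mindomandcovernumber}, which identifies $\gamma(M(G))$ with $\rho(G)$ whenever $G$ has no isolated vertices. Since $G$ is connected with $n\ge 2$ vertices, $G$ itself has no isolated vertices, and in $G\circ K_1$ each vertex of $G$ acquires a new pendant neighbor, so $G\circ K_1$ also has no isolated vertices. Hence $\gamma(M(G\circ K_1))=\rho(G\circ K_1)$, and it remains to show that $\rho(G\circ K_1)=n$.

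For the upper bound, I would exhibit an explicit edge cover. Writing $V(G)=\{v_1,\dots,v_n\}$ and letting $u_i$ denote the pendant vertex attached to $v_i$ in $G\circ K_1$, the set $S=\{v_iu_i\mid 1\le i\le n\}$ has exactly $n$ edges and covers every vertex of $G\circ K_1$: each $u_i$ is incident to $v_iu_i\in S$, and each $v_i$ is also incident to $v_iu_i\in S$. So $\rho(G\circ K_1)\le n$.

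For the lower bound, I would use the fact that every pendant $u_i$ has degree exactly $1$ in $G\circ K_1$, its unique incident edge being $v_iu_i$. Therefore any edge cover must contain $v_iu_i$ for every $1\le i\le n$, yielding at least $n$ distinct edges and hence $\rho(G\circ K_1)\ge n$.

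Combining the two inequalities gives $\rho(G\circ K_1)=n$, and applying Theorem~\ref{theo:mindomandcovernumber} yields $\gamma(M(G\circ K_1))=n$. There is no real obstacle here: the only subtlety is checking that Theorem~\ref{theo:mindomandcovernumber} is applicable, which requires the absence of isolated vertices in $G\circ K_1$, and this is immediate from the construction.
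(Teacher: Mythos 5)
Your proof is correct, but it takes a different route from the paper's. The paper proves the upper bound by exhibiting the same $n$ pendant edges (as vertices $m_{i(n+i)}$ of the middle graph) directly as a dominating set, and gets the lower bound for free from the general inequality $\gamma(M(H))\ge\lceil |V(H)|/2\rceil$ of Theorem~\ref{theo:mindominconnectgraph}: since $G\circ K_1$ has $2n$ vertices, this immediately gives $\gamma(M(G\circ K_1))\ge n$. You instead route everything through Theorem~\ref{theo:mindomandcovernumber} and compute $\rho(G\circ K_1)$ directly, with the lower bound coming from the observation that each pendant vertex $u_i$ forces its unique incident edge $v_iu_i$ into any edge cover. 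Both arguments are sound; the paper's lower bound is a one-line appeal to a general bound that happens to be tight precisely because the corona doubles the vertex count, while yours is more self-contained and pins down not just the size but the exact structure of a minimum edge cover (it must consist of precisely the $n$ pendant edges), which is slightly more information than the statement requires.
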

\begin{proof} 
To fix the notation, assume $V(G)=\{v_1,\dots, v_{n}\}$. Then $V(G\circ K_1)= \{v_{1},\dots, v_{2n}\}$ and $E(G\circ K_1)=\{v_1v_{n+1},\dots, v_nv_{2n} \}\cup E(G) $. Then $V(M(G\circ K_1))=V(G\circ K_1)\cup \mathcal{M}$, where $\mathcal{M}=\{ m_{i(n+i)}~|~1\leq i \leq n \}\cup \{ m_{ij}~|~v_iv_j\in  E(G)\}$.

If we consider $S=\{m_{i(n+i)}~|~1\leq i \leq n\}$, then $S$ is a dominating set of $M(G\circ K_1)$ with $|S|=n$, and hence $\gamma(M(G\circ K_1))\le n$. 
On the other hand, by Theorem~\ref{theo:mindominconnectgraph}, $\gamma(M(G\circ K_1))\ge \lceil\frac{2n}{2}\rceil=n$.

\end{proof}

\begin{Theorem}\label{theomindomin2corona}
For any connected graph $G$ of order $n\geq 2$, $$\gamma(M(G\circ P_2))=n+\gamma(M(G)).$$
\end{Theorem}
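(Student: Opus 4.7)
The plan is to establish the two inequalities $\gamma(M(G\circ P_2)) \le n + \gamma(M(G))$ and $\gamma(M(G\circ P_2)) \ge n + \gamma(M(G))$ separately, using Lemma~\ref{lemma:dominationisalledges} to restrict attention to dominating sets contained in the edge set. To fix notation, for each $i = 1, \dots, n$, I would denote by $u_i$ and $w_i$ the two vertices attached to $v_i$ in $G\circ P_2$ via the edges $v_iu_i$ and $u_iw_i$, and let $a_i$ and $b_i$ be the middle vertices of these two edges in $M(G\circ P_2)$.

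For the upper bound, I would take a minimum dominating set $S_0 \subseteq E(G)$ of $M(G)$ and form $S = S_0 \cup \{b_1, \dots, b_n\}$. Each $b_i$ dominates $\{u_i, w_i, a_i, b_i\}$, using that the only edge of $G\circ P_2$ adjacent to $u_iw_i$ is $v_iu_i$. The remaining vertices of $M(G\circ P_2)$ form the subset $V(G) \cup \{m_{ij} : v_iv_j \in E(G)\}$, and on this subset the adjacencies inside $M(G\circ P_2)$ coincide with those inside $M(G)$, so $S_0$ dominates them. This gives $|S| = \gamma(M(G)) + n$ and proves the upper bound.

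For the lower bound, I would take a minimum dominating set $S$ of $M(G\circ P_2)$ with $S \subseteq E(G\circ P_2)$. The vertex $w_i$ has a unique neighbor in $M(G\circ P_2)$, namely $b_i$, forcing $b_i \in S$ for every $i$. Hence $|S| = n + |T|$ with $T = S \setminus \{b_1, \dots, b_n\} \subseteq E(G) \cup \{a_1, \dots, a_n\}$. From $T$ I would then build a set $S' \subseteq E(G)$ of cardinality at most $|T|$ by keeping the edges of $T \cap E(G)$ and, for each $a_i \in T$, substituting the middle vertex of any edge of $G$ incident to $v_i$ (such an edge exists since $G$ is connected with $n \ge 2$).

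The delicate step I expect to be the main obstacle is verifying that $S'$ actually dominates $M(G)$. Each $v_k \in V(G)$ must be checked: the element of $S$ dominating $v_k$ in $M(G\circ P_2)$ lies in $\{m_{kj} : v_kv_j \in E(G)\} \cup \{a_k\}$, and in either case the corresponding member of $S'$ remains adjacent to $v_k$ in $M(G)$. Similarly, each middle vertex $m_{ij}$ must be dominated: if the dominator in $S$ lies in $E(G)$ it is kept in $S'$; if it is $a_i$ or $a_j$, the replacement edge through $v_i$ (resp.\ $v_j$) is still adjacent to $m_{ij}$ in $M(G)$ because they share the endpoint $v_i$ (resp.\ $v_j$). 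Once this is verified, $\gamma(M(G)) \le |S'| \le |T| = |S| - n = \gamma(M(G\circ P_2)) - n$, completing the proof.
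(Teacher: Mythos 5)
Your proof is correct and follows essentially the same route as the paper: the leaves $w_i$ force the $n$ middle vertices $b_i$ of the pendant edges $u_iw_i$ into any edge-only dominating set, and the upper bound comes from adjoining these $n$ vertices to a minimum dominating set of $M(G)$ contained in $E(G)$. If anything, your lower-bound argument is more careful than the paper's, which deduces $|S|\ge n+\gamma(M(G))$ directly from the disjointness $N_{M(G)}[v_i]\cap\{m_{(n+j)(2n+j)}\mid 1\le j\le n\}=\emptyset$ without explicitly handling the elements of type $a_i=m_{i(n+i)}$ that you replace by edges of $G$ through $v_i$.
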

\begin{proof} 
To fix the notation, assume $V(G)=\{v_1,\dots, v_n\}$. Then $V(G\circ P_2)= \{v_{1},\dots, v_{3n}\}$ and $E(G\circ P_2)=\{v_iv_{n+i}, v_{n+i}v_{2n+i}~|~1\leq i \leq n \}\cup E(G) $. Then $V(M(G\circ P_2))=V(G\circ P_2)\cup \mathcal{M}$, where $\mathcal{M}=\{ m_{i(n+i)},m_{(n+i)(2n+i)}~|~1\leq i \leq n \}\cup \{ m_{ij}~|~v_iv_j\in  E(G)\}$.

Let $S$ be a dominating set of $M(G\circ P_2)$. By Lemma~\ref{lemma:dominationisalledges}, we can assume that $S\subseteq\mathcal{M}$. 
Since $d_{G\circ P_2}(v_{2n+i})=1$, for every $1\leq i\leq n$, we have $m_{(n+i)(2n+i)}\in S$ for every $1\leq i\leq n$. In addition, since $N_{M(G)}[v_i]\cap\{m_{(n+j)(2n+j)}~|~1\le j\le n\}=\emptyset$, for all $1\leq i\leq n$, we have $|S|\geq n+\gamma(M(G))$.

On the other hand, if $S'$ is a minimal dominating set of $M(G)$, then $S=S'\cup\{m_{(n+i)(2n+i)}~|~1\le i\le n\}$ is a dominating set of $M(G\circ P_2)$ with $|S|=n+\gamma(M(G))$, and hence $\gamma(M(G\circ P_2))\le n+\gamma(M(G))$.

\end{proof}

\begin{Definition}
The \emph{join} $G+ H$ of two graphs $G$ and $H$ is the graph with vertex set $V(G+H)=V(G)\cup V(H)$ and edge set $E(G+H)=E(G)\cup E(H)\cup \{vw~|~v\in V(G), w\in V(H)\}$.
\end{Definition}

In the next series of theorems, we study the domination number of the middle graph of the join of a graph with $\overline{K_p}$. 
\begin{Theorem}\label{theomindominjoinpbig}
For any connected graph $G$ of order $n\geq 2$ and any integer $p\geq n$, 
$$\gamma(M(G+\overline{K_p}))= p.$$
\end{Theorem}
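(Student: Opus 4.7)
My plan is to invoke Theorem~\ref{theo:mindomandcovernumber} and reduce the question to computing the edge cover number $\rho(G+\overline{K_p})$. The hypothesis of that theorem is satisfied because $G$ is connected of order $n\ge 2$ and $p\ge n\ge 2$: every vertex of $\overline{K_p}$ is joined to all $n$ vertices of $G$, and every vertex of $G$ is joined to all $p$ vertices of $\overline{K_p}$, so $G+\overline{K_p}$ has no isolated vertex. Hence $\gamma(M(G+\overline{K_p}))=\rho(G+\overline{K_p})$, and it suffices to show $\rho(G+\overline{K_p})=p$.

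For the lower bound, label the vertices of $\overline{K_p}$ as $u_1,\dots,u_p$. They form an independent set in $G+\overline{K_p}$, so every edge incident to some $u_i$ has its other endpoint in $V(G)$; in particular, no single edge covers two distinct vertices $u_i,u_j$. Any edge cover must therefore contain at least $p$ distinct edges, one incident to each $u_i$, which yields $\rho(G+\overline{K_p})\ge p$.

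For the upper bound I would use the hypothesis $p\ge n$: fix any surjection $f\colon\{1,\dots,p\}\to\{1,\dots,n\}$ and take $S=\{u_iv_{f(i)}\mid 1\le i\le p\}\subseteq E(G+\overline{K_p})$. Every $u_i$ is covered by construction, and surjectivity of $f$ ensures every $v_j\in V(G)$ is covered as well, so $S$ is an edge cover of cardinality $p$, giving $\rho(G+\overline{K_p})\le p$. I do not foresee a serious obstacle: the heavy lifting is done by Theorem~\ref{theo:mindomandcovernumber}, after which the two bounds are elementary, and the numerical assumption $p\ge n$ intervenes only to guarantee the existence of the surjection $f$. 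An alternative, essentially equivalent route would build a dominating set of $M(G+\overline{K_p})$ of size $p$ directly by taking the edge-vertices $\{m_{u_iv_{f(i)}}\mid 1\le i\le p\}\subseteq \mathcal{M}$ and using Lemma~\ref{lemma:dominationisalledges} for the lower bound, but the edge cover viewpoint seems cleaner.
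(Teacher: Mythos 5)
Your proof is correct, but it takes a slightly different route from the paper. The paper argues directly about dominating sets of $M(G+\overline{K_p})$: it invokes Lemma~\ref{lemma:dominationisalledges} to reduce to a dominating set contained in $\mathcal{M}_1\cup\mathcal{M}_2$, observes that the vertices of $\overline{K_p}$ have pairwise disjoint neighborhoods in $\mathcal{M}$ to get $|S|\ge p$, and exhibits the explicit dominating set $\{m_{i(n+i)}\mid 1\le i\le n\}\cup\{m_{1(n+j)}\mid n+1\le j\le p\}$. You instead route everything through Theorem~\ref{theo:mindomandcovernumber}, reducing the problem to computing $\rho(G+\overline{K_p})$; your check that the join has no isolated vertices is correct, your independence argument for the lower bound is the same disjoint-neighborhood idea in edge-cover language, and your surjection $f$ with $f(i)=i$ for $i\le n$ and $f(i)=1$ otherwise recovers exactly the paper's explicit set. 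What your approach buys is economy and transparency: the middle-graph bookkeeping is done once in Theorem~\ref{theo:mindomandcovernumber} (which appears earlier in the paper, so the citation is legitimate), and the remaining argument is a purely elementary statement about edge covers of a join; it also makes clear that connectivity of $G$ plays no real role. What the paper's approach buys is self-containment at the level of explicit vertex labels in $M(G+\overline{K_p})$, which it then reuses verbatim in the proofs of Theorems~\ref{theo:mindominjoinpsmallineq} and~\ref{theo:mindominjoinpsmallequalmin}.
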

\begin{proof}
To fix the notation, assume $V(G)=\{v_1,\dots,v_n\}$ and $V(\overline{K_p})=\{v_{n+1},\dots,v_{n+p}\}$. Then $V(M(G+\overline{K_p}))=V(G+\overline{K_p})\cup  \mathcal{M}_1\cup  \mathcal{M}_2$ where $\mathcal{M}_1= \{m_{ij}~|~v_iv_j\in  E(G)\}$ and $\mathcal{M}_2= \{m_{i(n+j)}~|~1\leq i \leq n, 1\leq j \leq p\}$.  

Let $S$ be a dominating set of $M(G+\overline{K_p})$. By Lemma~\ref{lemma:dominationisalledges}, we can assume $S\subseteq \mathcal{M}_1\cup \mathcal{M}_2$. Since, if $j\ne k$, $N_{M(G+ \overline{K_p})}(v_{n+j})\cap N_{M(G+ \overline{K_p})}(v_{n+k})=\emptyset$, then for every $1\le j\le p$ there exists $1\le i\le n$ such that  $m_{i(n+j)}\in S$, and hence $|S|\ge p$. This implies that $\gamma(M(G+\overline{K_p}))\ge p$. 

On the other hand, if we consider $S=\{m_{i(n+i)}~|~1\leq i \leq n\}\cup \{m_{1(n+j)}~|~n+1\leq j \leq p\}$, then $S$ is a dominating set of $M(G+\overline{K_p})$ with $|S|=p$, and hence $\gamma(M(G+\overline{K_p}))\le p$.

\end{proof}

\begin{Theorem}\label{theo:mindominjoinpsmallineq}
For any connected graph $G$ of order $n\geq 2$ and any integer $p<n$, 
$$\lceil\frac{n+p}{2}\rceil\le\gamma(M(G+\overline{K_p}))\le n.$$
\end{Theorem}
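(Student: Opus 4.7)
The plan is to derive each inequality from a result already proved in this section. For the lower bound, I would apply Theorem~\ref{theo:mindominconnectgraph} directly to the graph $G+\overline{K_p}$. First I check that $G+\overline{K_p}$ is connected (and in particular has no isolated vertex): $G$ is connected by assumption, and in the join every vertex of $\overline{K_p}$ is adjacent to every vertex of $V(G)$. Theorem~\ref{theo:mindominconnectgraph} applied to this graph of order $n+p\ge 2$ then yields
$$\gamma(M(G+\overline{K_p})) \ge \left\lceil \tfrac{n+p}{2} \right\rceil.$$

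For the upper bound I would route through the edge cover number via Theorem~\ref{theo:mindomandcovernumber}, which gives $\gamma(M(H))=\rho(H)$ whenever $H$ has no isolated vertex. It therefore suffices to exhibit an edge cover of $G+\overline{K_p}$ of cardinality $n$. Using the standard labelling $V(G)=\{v_1,\dots,v_n\}$ and $V(\overline{K_p})=\{v_{n+1},\dots,v_{n+p}\}$, I would propose
$$S \;=\; \{v_j v_{n+j}\mid 1\le j\le p\}\;\cup\;\{v_i v_{n+1}\mid p+1\le i\le n\}.$$
Every edge listed lies in $G+\overline{K_p}$ since, in the join, each vertex of $V(G)$ is adjacent to each vertex of $V(\overline{K_p})$. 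The first block covers $v_1,\dots,v_p$ together with $v_{n+1},\dots,v_{n+p}$, while the second block—non-empty precisely because of the hypothesis $p<n$—covers $v_{p+1},\dots,v_n$ (and incidentally $v_{n+1}$ again). Hence $S$ is an edge cover of size $p+(n-p)=n$, so $\rho(G+\overline{K_p})\le n$ and thus $\gamma(M(G+\overline{K_p}))\le n$.

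I do not expect any serious obstacle: once Theorems~\ref{theo:mindominconnectgraph} and~\ref{theo:mindomandcovernumber} are available, the only substantive observations needed are that the join has no isolated vertex (making both theorems applicable) and that the assumption $p<n$ ensures the second block of $S$ is well-defined and furnishes the vertices of $G$ not reached by the pairing $v_j \mapsto v_{n+j}$. The mildly delicate point is simply keeping the two indexing ranges disjoint so that $|S|$ is exactly $n$.
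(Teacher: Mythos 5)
Your proof is correct, and the lower bound is handled exactly as in the paper: a direct application of Theorem~\ref{theo:mindominconnectgraph} to $G+\overline{K_p}$. For the upper bound you take a genuinely different route. The paper exhibits an explicit dominating set of $M(G+\overline{K_p})$ of mixed type, namely $S=\{m_{i(n+i)}\mid 1\le i\le p\}\cup\{v_j\mid p+1\le j\le n\}$, putting the $n-p$ ``unpaired'' vertices of $G$ into the dominating set as vertices of $M(G+\overline{K_p})$ and checking domination directly. You instead pass through the edge cover number: you build an edge cover of $G+\overline{K_p}$ of size $n$ (pairing $v_j$ with $v_{n+j}$ for $j\le p$ and covering the remaining $v_{p+1},\dots,v_n$ by edges into $v_{n+1}$) and invoke Theorem~\ref{theo:mindomandcovernumber}. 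Your version buys a cleaner verification --- you only need to check that every vertex of the join is incident to some edge of $S$, rather than checking that every vertex of $M(G+\overline{K_p})$, including all the edge-vertices $m_{rs}$, is dominated --- at the cost of relying on the stronger Theorem~\ref{theo:mindomandcovernumber} (the paper here only needs Lemma~\ref{lemma:dominationisalledges} implicitly, and in fact needs nothing at all for this direction). One pedantic remark: your second block presupposes $p\ge 1$ so that $v_{n+1}$ exists; this is consistent with the paper's standing convention that all graphs are non-empty, but is worth a word if $p=0$ were allowed.
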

\begin{proof} The first inequality follows directly from Theorem~\ref{theo:mindominconnectgraph}.
On the other hand, using the same notation as in the proof of Theorem~\ref{theomindominjoinpbig}, if we consider $S=\{m_{i(n+i)}~|~1\le i\le p\}\cup\{v_j~|~p+1\le j\le n\}$, then $S$ is a dominating set of $M(G+\overline{K_p})$ with $|S|=n$, and hence we obtain the second inequality.
\end{proof}

\begin{Remark} Both inequalities in Theorem~\ref{theo:mindominjoinpsmallineq} are sharp. In fact, if $G=C_4$ and $p=2$, then a direct computation shows that $\gamma(M(C_4+\overline{K_2}))=3=\lceil\frac{n+p}{2}\rceil$. Moreover, if $G=C_4$ and $p=3$, then $\gamma(M(C_4+\overline{K_3}))=4=n$. See also Corollary~\ref{coroll:mindominjoinfamilspecial}.
\end{Remark}

\begin{Theorem}\label{theo:mindominjoinpsmallequalmin}
For any connected graph $G$ of order $n\geq 2$ and any integer $p<n$, 
$$\gamma(M(G+\overline{K_p}))=p+\min\{\gamma(M(G[A]))~|~A\subseteq V(G), |A|=n-p\}.$$
\end{Theorem}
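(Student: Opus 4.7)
The plan is to prove $\le$ and $\ge$ separately, both by explicit constructions based on Lemma~\ref{lemma:dominationisalledges}, reusing the notation of Theorem~\ref{theomindominjoinpbig}.

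For the upper bound, I will fix any $A\subseteq V(G)$ with $|A|=n-p$, enumerate $V(G)\setminus A=\{v_{i_1},\ldots,v_{i_p}\}$, take a minimum dominating set $S'$ of $M(G[A])$, and form $S=S'\cup\{m_{i_j(n+j)}\mid 1\le j\le p\}$. A short case check -- handling separately vertices of $A$, the $v_{i_j}$ and $v_{n+j}$, edge-vertices of $\mathcal{M}_1$ (split according to how many endpoints lie in $A$), and edge-vertices of $\mathcal{M}_2$ (split according to whether the $G$-endpoint lies in $A$) -- will show that $S$ dominates $M(G+\overline{K_p})$, yielding $\gamma(M(G+\overline{K_p}))\le p+\gamma(M(G[A]))$ for every admissible $A$ and hence the ``$\le$'' direction.

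For the lower bound, I let $S$ be a minimum dominating set of $M(G+\overline{K_p})$ and, by Lemma~\ref{lemma:dominationisalledges}, assume $S\subseteq\mathcal{M}_1\cup\mathcal{M}_2$; write $S=S_1\cup S_2$. Since the $M$-neighborhood of $v_{n+j}$ lies entirely in $\mathcal{M}_2$, for each $j$ I can pick $\sigma(j)$ with $m_{\sigma(j)(n+j)}\in S_2$, which already forces $|S_2|\ge p$. Setting $B=\{v_{\sigma(j)}\mid 1\le j\le p\}$, extending $B$ (using $p<n$) to a set $B^*\subseteq V(G)$ with $|B^*|=p$, and letting $A=V(G)\setminus B^*$ gives an $A$ of the required size; my goal is to extract from $S$ a dominating set of $M(G[A])$ of size at most $|S|-p$. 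I will take $T=(S_1\cap E(G[A]))\cup B_{\mathrm{bad}}$, where $B_{\mathrm{bad}}=\{v_i\in A\mid v_i\text{ is not dominated in }M(G[A])\text{ by }S_1\cap E(G[A])\}$; that $T$ dominates $M(G[A])$ will follow by a short case analysis on whether a given $m_{ab}\in E(G[A])$ shares an endpoint with an element of $S_1\cap E(G[A])$ or has an endpoint in $B_{\mathrm{bad}}$.

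The main obstacle is the size estimate for $T$. To each $v_i\in B_{\mathrm{bad}}$ I will assign a witness edge $e_i\in S$ incident to $v_i$: either $e_i=m_{ik}\in S_1\setminus E(G[A])$ with $v_k\in B^*$, or $e_i=m_{i(n+l)}\in S_2$. The delicate point is to guarantee that, in the second case, $e_i$ is not one of the ``committed'' edges $m_{\sigma(j)(n+j)}$; this is precisely why I insist on $B^*\supseteq B$, which ensures no $v_i\in A$ equals any $v_{\sigma(l)}$. Injectivity of $v_i\mapsto e_i$ (each candidate edge has a unique $A$-endpoint) then yields $|B_{\mathrm{bad}}|\le|S_1\setminus E(G[A])|+(|S_2|-p)$, and hence $|T|\le|S|-p$, completing the lower bound after minimizing over $A$.
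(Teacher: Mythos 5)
Your proposal is correct. The upper bound is the same construction as the paper's, but your lower bound takes a genuinely different (and more robust) route, and in fact it repairs a gap in the paper's own argument. The paper lets $V'$ be the set of vertices of $G$ not dominated by $S\cap\mathcal{M}_2$ and asserts that $S\cap E(G[V'])$ dominates $M(G[V'])$; this is false in general, because a vertex $v_i\in V'$ may be dominated in $M(G+\overline{K_p})$ only by an edge $m_{ik}$ whose other endpoint $v_k$ lies outside $V'$. For instance, take $G=P_4$, $p=1$ and the minimum dominating set $S=\{m_{2(n+1)},m_{12},m_{34}\}$: then $V'=\{v_1,v_3,v_4\}$ and $S\cap E(G[V'])=\{m_{34}\}$ fails to dominate $v_1$, which is isolated in $G[V']$. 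The paper also does not address the possibility $|S\cap\mathcal{M}_2|>p$, in which case $|V'|$ need not equal $n-p$, so the minimum over sets of size exactly $n-p$ is not directly reached. Your argument avoids both problems: you fix a set $A$ of size exactly $n-p$ disjoint from the chosen endpoints $v_{\sigma(j)}$, add the undominated vertices $B_{\mathrm{bad}}$ to the candidate dominating set of $M(G[A])$, and charge them injectively to edges of $S$ not already counted, namely edges of $S_1$ leaving $A$ and edges of $S_2$ other than the $p$ committed ones $m_{\sigma(j)(n+j)}$. The observation that a witness $m_{i(n+l)}$ with $v_i\in A$ can never coincide with a committed edge, precisely because $v_{\sigma(l)}\in B^*=V(G)\setminus A$, is the point the paper's sketch glosses over; your charging argument converts the naive count into the correct inequality $|S|\ge p+\gamma(M(G[A]))$. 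What the paper's version buys is brevity; what yours buys is an actual proof.
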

\begin{proof} Let $A_0\subset V(G)$ be such that $|A_0|=n-p$ and $\gamma(M(G[A_0]))=\min\{\gamma(M(G[A]))~|~A\subseteq V(G), |A|=n-p\}$. Without loss of generality, we can assume that $A_0=\{v_{p+1},\dots, v_n\}$. Let $S'$ be a minimal dominating set of $M(G[A_0])$. Then, using the same notation as in the proof of Theorem~\ref{theomindominjoinpbig}, if we consider $S=\{m_{i(n+i)}~|~1\le i\le p\}\cup S'$, we have that $S$ is a dominating set of $M(G+\overline{K_p})$ with $|S|=p+\gamma(M(G[A_0]))$, and hence $\gamma(M(G+\overline{K_p}))\le p+\min\{\gamma(M(G[A]))~|~A\subseteq V(G), |A|=n-p\}$.

On the other hand, let $S$ be a dominating set of $M(G+\overline{K_p})$. By Lemma~\ref{lemma:dominationisalledges}, we can assume $S\subseteq \mathcal{M}_1\cup \mathcal{M}_2$. Since in $G+\overline{K_p}$ each vertex of $V(\overline{K_p})$ is adjacent only to vertices in $V(G)$, we have $|S\cap\mathcal{M}_2|\ge p$. Moreover, let $V'\subset V(G)$ be the set of vertices not dominated by $S\cap\mathcal{M}_2$, then $S\cap E(G[V'])$ is a dominating set of $M(G[V'])$. This implies that $|S|\ge p+\min\{\gamma(M(G[A]))~|~A\subseteq V(G), |A|=n-p\}$, and hence $\gamma(M(G+\overline{K_p}))\ge p+\min\{\gamma(M(G[A]))~|~A\subseteq V(G), |A|=n-p\}$.

\end{proof}

%

\section{\bf Middle graph of known graphs and their domination number}\label{t800}
In this section, we obtain the domination number of the middle graph of some special families of graphs.

\begin{Proposition}\label{prop:mindominstar}
For any star graph $K_{1,n}$ on $n+1\ge2$ vertices, we have 
$$\gamma(M(K_{1,n}))=n.$$
\end{Proposition}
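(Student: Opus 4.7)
The plan is to deduce this immediately from Theorem~\ref{theo:mindomandcovernumber}, which gives $\gamma(M(G))=\rho(G)$ whenever $G$ has no isolated vertex. Since $K_{1,n}$ is connected on $n+1\ge 2$ vertices, this hypothesis is satisfied, so it suffices to show that $\rho(K_{1,n})=n$.

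First I would fix the notation by calling $v_0$ the central vertex and $v_1,\dots,v_n$ the leaves, so the edge set is exactly $\{e_i=v_0v_i\mid 1\le i\le n\}$. Because each leaf $v_i$ has degree one, the only edge of $K_{1,n}$ incident to $v_i$ is $e_i$ itself; hence every edge cover must contain every $e_i$, forcing $\rho(K_{1,n})\ge n$. The reverse inequality is immediate, since the full edge set $\{e_1,\dots,e_n\}$ is itself an edge cover. Combining these with Theorem~\ref{theo:mindomandcovernumber} gives $\gamma(M(K_{1,n}))=\rho(K_{1,n})=n$.

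There is essentially no obstacle; the only thing to watch is that Theorem~\ref{theo:mindomandcovernumber} requires the absence of isolated vertices, which is trivially true for the connected graph $K_{1,n}$. A self-contained alternative, avoiding Theorem~\ref{theo:mindomandcovernumber}, would establish the upper bound by exhibiting $S=\{m_{0i}\mid 1\le i\le n\}\subseteq V(M(K_{1,n}))$, which dominates each leaf $v_i$ via $m_{0i}$, dominates the center $v_0$ via any $m_{0i}$, and contains every edge-vertex; and would establish the lower bound for $n\ge 2$ from Proposition~\ref{prop:mindomintreeleaf}, noting that the $n$ leaves of $K_{1,n}$ have pairwise disjoint closed neighborhoods $\{v_i,m_{0i}\}$ in $M(K_{1,n})$, while the degenerate case $n=1$ reduces to checking by hand that $M(K_{1,1})\cong P_3$ has domination number $1$.
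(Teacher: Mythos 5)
Your main argument is correct but takes a genuinely different route from the paper. The paper proves the upper bound by exhibiting the explicit dominating set $S=\mathcal{M}=\{m_i\mid 1\le i\le n\}$ and the lower bound by invoking Proposition~\ref{prop:mindomintreeleaf} with $|\leaf(K_{1,n})|=n$ --- which is precisely the ``self-contained alternative'' you sketch at the end. Your primary route, via Theorem~\ref{theo:mindomandcovernumber} and the observation that each leaf forces its unique incident edge into any edge cover, is shorter and arguably cleaner: it reduces everything to the trivial computation $\rho(K_{1,n})=n$ and, notably, handles the degenerate case $n=1$ uniformly. That case is a genuine subtlety: when $n=1$ the two leaves of $K_{1,1}$ are adjacent, their closed neighborhoods in $M(K_{1,1})$ share the edge-vertex, and the disjointness argument behind Proposition~\ref{prop:mindomintreeleaf} does not apply (indeed $\gamma(M(K_{1,1}))=1<2=|\leaf(K_{1,1})|$); the paper's proof silently glosses over this, whereas you flag it and check $M(K_{1,1})\cong P_3$ by hand. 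There is no circularity in your use of Theorem~\ref{theo:mindomandcovernumber}, since it precedes this proposition and depends only on Lemma~\ref{lemma:dominationisalledges}.
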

\begin{proof} To fix the notation, assume $V(K_{1,n})=\{v_0,v_1,\dots, v_n\}$ and $E(K_{1,n})=\{v_0v_1,v_0v_2,\dots, v_0v_n\}$. Then $V(M(K_{1,n}))=V(K_{1,n})\cup \mathcal{M}$, where $\mathcal{M}=\{ m_i~|~1\leq i \leq n \}$.

If $S=\mathcal{M}$, then $S$ is a dominating set of $M(K_{1,n})$ with $|S|=n$, and hence $\gamma(M(K_{1,n}))\le n$. On the other hand, 
since $K_{1,n}$ is a tree, we can apply Proposition~\ref{prop:mindomintreeleaf} and obtain $\gamma(M(K_{1,n}))\ge |\leaf(K_{1,n})|=n$.

\end{proof}

\begin{Theorem}\label{theo:mindominisn-1}
Let $G$ be a connected graph of order $n\ge4$. Then 
$$G=K_{1,n-1} ~~\text{ if and only if }~~ \gamma(M(G))=n-1.$$
\end{Theorem}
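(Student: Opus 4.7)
The plan is to combine the easy direction coming from Proposition~\ref{prop:mindominstar} with the edge-cover reformulation of Theorem~\ref{theo:mindomandcovernumber} in order to reduce the nontrivial direction to a structural statement about matchings.

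For the forward implication, Proposition~\ref{prop:mindominstar} applied with the star $K_{1,n-1}$ directly yields $\gamma(M(K_{1,n-1})) = n-1$, so nothing more is required.

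For the converse, assume $\gamma(M(G)) = n-1$. Since $G$ is connected and $n\ge 4$, $G$ has no isolated vertex, so Theorem~\ref{theo:mindomandcovernumber} gives $\rho(G) = n-1$. I would then argue by contrapositive: if $G\neq K_{1,n-1}$, I shall produce an edge cover of $G$ of size at most $n-2$, contradicting $\rho(G)=n-1$. The key lemma I need is that every connected graph on $n\ge 4$ vertices which is not a star contains two independent edges, i.e., a matching of size $2$. Granting this, pick such a matching $\{e_1,e_2\}$; its endpoints are four distinct vertices. For each of the remaining $n-4$ vertices $w$, choose one edge incident to $w$ (it exists because $G$ has no isolated vertex). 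The resulting set of edges has size at most $2+(n-4)=n-2$ and covers every vertex of $G$, contradicting $\rho(G)=n-1$.

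The main obstacle is then proving the matching claim, which is a standard fact but worth laying out explicitly. Suppose for contradiction that $G$ has no matching of size $2$, so any two edges share a vertex. Fix an edge $uv$. If every edge contains $u$, or every edge contains $v$, then $G$ is a star and hence equals $K_{1,n-1}$, contradicting our assumption. Otherwise there exist edges $e=vw$ with $w\neq u$ and $e'=uw'$ with $w'\neq v$; since $e$ and $e'$ must share an endpoint and none of $u,v$ is available, we must have $w=w'$. Repeating the same argument with any further edge $f$ against the triangle on $\{u,v,w\}$ forces $f$ to have both endpoints in $\{u,v,w\}$, so $E(G)\subseteq\{uv,vw,uw\}$; but then every vertex outside $\{u,v,w\}$ is isolated, contradicting both connectedness and $n\ge 4$. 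This closes the argument, and the theorem follows.
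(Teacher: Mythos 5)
Your proof is correct, but it reaches the conclusion by a different road than the paper. For the hard direction the paper argues directly on the middle graph: it observes that a connected non-star graph on $n\ge 4$ vertices contains a subgraph isomorphic to $P_4$ (a fact it does not prove) and then exhibits the explicit dominating set $\{m_{12},m_{34}\}\cup\{v_5,\dots,v_n\}$ of $M(G)$ of size $n-2$. You instead route everything through Theorem~\ref{theo:mindomandcovernumber}, translating $\gamma(M(G))=n-1$ into $\rho(G)=n-1$ and then showing that a connected non-star graph on $n\ge 4$ vertices has a matching of size $2$, which via the usual greedy completion yields an edge cover of size at most $n-2$ (this is the Gallai-type bound $\rho(G)\le n-\mu(G)$ in disguise). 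The two structural facts are essentially equivalent --- the two end edges of a $P_4$ form a matching of size $2$, and conversely --- but your write-up has the advantage of actually proving the structural lemma it relies on, whereas the paper asserts the $P_4$ claim without justification; the paper's approach has the advantage of staying entirely inside $M(G)$ and producing a concrete dominating set, at the cost of mixing vertices of $V(G)$ and $\mathcal{M}$ in that set. Both arguments are sound, and yours is arguably the more self-contained of the two.
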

\begin{proof} If $G=K_{1,n-1}$, then $\gamma(M(G))=n-1$ by Proposition~\ref{prop:mindominstar}.

On the other hand, let $G$ be a connected graph such that $\gamma(M(G))=n-1$. To fix the notation, assume $V(G)=\{v_1,\dots, v_n\}$ and $V(M(G))=V(G)\cup\mathcal{M}$, where $\mathcal{M}=\{m_{ij}~|~v_iv_j\in E(G)\}$. If $G\ne K_{1,n-1}$, then $G$ has $G'$ a subgraph isomorphic to $P_4$. Without loss of generality, we can assume that $V(G')=\{v_1,\dots, v_4\}$. Then $S=\{m_{12},m_{34}\}\cup\{v_5,\dots, v_n\}$ is a dominating set of $M(G)$ with $|S|=n-2$, and hence $\gamma(M(G))\le n-2$. Hence, this is a contradiction.

\end{proof}

Putting together Theorems~\ref{theo:mindominconnectgraph} and \ref{theo:mindominisn-1}, we obtain the following result.
\begin{Corollary} Let $G$ be a connected graph of order $n\ge4$. Assume that $G\ne K_{1,n-1}$, then
$$\gamma(M(G))\le n-2.$$
\end{Corollary}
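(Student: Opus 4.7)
The plan is to combine the two immediately preceding results in a one-line contrapositive argument. From Theorem~\ref{theo:mindominconnectgraph}, since $G$ is connected (hence has no isolated vertices) and has $n\ge 4$ vertices, I get the blanket upper bound $\gamma(M(G))\le n-1$. From the ``only if'' direction of Theorem~\ref{theo:mindominisn-1}, the equality $\gamma(M(G))=n-1$ characterizes the star $K_{1,n-1}$.

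So the argument is simply: assume for contradiction that $\gamma(M(G))>n-2$. Then by the upper bound $\gamma(M(G))\le n-1$ we must have $\gamma(M(G))=n-1$, and by Theorem~\ref{theo:mindominisn-1} this forces $G=K_{1,n-1}$, contradicting the hypothesis. Hence $\gamma(M(G))\le n-2$.

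There is no real obstacle here, since both ingredients are already established; the only thing to be careful about is invoking Theorem~\ref{theo:mindominisn-1} in the correct direction (we need ``$\gamma(M(G))=n-1$ implies $G=K_{1,n-1}$'', which is the nontrivial direction proved via the $P_4$ subgraph trick) and ensuring the hypotheses match, namely connectedness and $n\ge 4$, both of which are given in the corollary.
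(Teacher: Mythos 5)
Your proof is correct and is exactly the argument the paper intends: it states the corollary follows by ``putting together'' Theorems~\ref{theo:mindominconnectgraph} and~\ref{theo:mindominisn-1}, which is precisely your contrapositive combination of the upper bound $\gamma(M(G))\le n-1$ with the characterization of equality. You also correctly identify that the nontrivial ingredient is the ``only if'' direction of Theorem~\ref{theo:mindominisn-1}.
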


In the following result, we compute the domination number of the middle graph of the join of $K_{1,n}$ with $\overline{K_p}$. 
\begin{Proposition}\label{prop:mindominstarjoin}
For any star graph $K_{1,n}$ on $n+1\ge2$ vertices, we have 
$$\gamma(M(K_{1,n}+\overline{K_p}))=\max\{n,p\}.$$
\end{Proposition}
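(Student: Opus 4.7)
The plan is to deduce the formula from the join theorems of Section~2, splitting into two regimes according to whether $p$ is at least $n+1$ (the order of $K_{1,n}$).

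In the regime $p\ge n+1$, Theorem~\ref{theomindominjoinpbig} applies directly to $G=K_{1,n}$, which is connected of order $n+1\le p$, and gives $\gamma(M(K_{1,n}+\overline{K_p}))=p$. Since $p>n$ in this range, this is exactly $\max\{n,p\}$.

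In the regime $p\le n$, I would invoke Theorem~\ref{theo:mindominjoinpsmallequalmin}, which reduces the computation to
$$\gamma(M(K_{1,n}+\overline{K_p}))=p+\min\{\gamma(M(K_{1,n}[A]))~|~A\subseteq V(K_{1,n}),\ |A|=n+1-p\}.$$
For each admissible $A$, the induced subgraph $K_{1,n}[A]$ is either a smaller star or an empty graph, depending on whether the center of $K_{1,n}$ belongs to $A$. If the center lies in $A$, then $K_{1,n}[A]\cong K_{1,n-p}$, so by Proposition~\ref{prop:mindominstar} its middle graph has domination number $n-p$; if the center is not in $A$, then $K_{1,n}[A]$ consists of $n+1-p$ isolated vertices, and since each isolated vertex must belong to every dominating set of its middle graph, one gets $n+1-p$. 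Taking the smaller of $n-p$ and $n+1-p$, the minimum is $n-p$, attained precisely by those $A$ containing the center, so
$$\gamma(M(K_{1,n}+\overline{K_p}))=p+(n-p)=n=\max\{n,p\}.$$

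The main obstacle will be the bookkeeping inside Theorem~\ref{theo:mindominjoinpsmallequalmin}: I must verify that including the center of $K_{1,n}$ in $A$ really does minimize $\gamma(M(K_{1,n}[A]))$, and pay attention to the degenerate boundary $p=n$ where $K_{1,n-p}$ collapses to a single vertex, to confirm the formula still holds there. Other than these corner checks, the argument is a direct assembly of the tools already developed.
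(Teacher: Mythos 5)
Your reduction via Theorem~\ref{theo:mindominjoinpsmallequalmin} is a genuinely different route from the paper's: for $p<n$ the paper argues directly, observing that the $n$ leaves of $K_{1,n}$ have pairwise disjoint edge-neighborhoods in $\mathcal{M}$ (forcing $|S|\ge n$ for any dominating set of edges) and exhibiting an explicit dominating set $\{m_{i(n+i)}~|~1\le i\le p\}\cup\{m_i~|~p+1\le i\le n\}$ of size $n$. Your classification of the induced subgraphs $K_{1,n}[A]$ is correct, and for $p\le n-1$ (where $n-p\ge1$, so Proposition~\ref{prop:mindominstar} applies to $K_{1,n-p}$) as well as for $p\ge n+1$ your assembly goes through cleanly.

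The boundary case $p=n$ that you defer, however, is not a formality, and it fails. There $|A|=1$, so $K_{1,n}[A]$ is a single isolated vertex whether or not it is the center, and $\gamma(M(K_{1,n}[A]))=1$, not $n-p=0$; your machinery therefore outputs $p+1=n+1$ rather than $\max\{n,p\}=n$. This is not a defect of your method but of the statement: the join $K_{1,n}+\overline{K_n}$ has $2n+1$ vertices and no isolated vertices, so Theorem~\ref{theo:mindominconnectgraph} already forces $\gamma(M(K_{1,n}+\overline{K_n}))\ge\lceil\frac{2n+1}{2}\rceil=n+1>n$. (Concretely, $K_{1,2}+\overline{K_2}$ has a Hamiltonian path $v_3v_1v_0v_2v_4$, so Theorem~\ref{theo:mindominsidepath} gives the value $3$, not $\max\{2,2\}=2$.) The paper's own proof conceals the same hole: it invokes Theorem~\ref{theomindominjoinpbig} for ``$p\ge n$'', but that theorem requires $p$ to be at least the order of the joined graph, which for $K_{1,n}$ is $n+1$. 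A correct write-up must split as $p\ge n+1$ versus $p\le n-1$ and either exclude $p=n$ from the statement or record the value there as $n+1$.
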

\begin{proof} If $p\ge n$, then the statement is a consequence of Theorem~\ref{theomindominjoinpbig}.

Assume $p<n$. To fix the notation, assume $V(K_{1,n})=\{v_0,v_1,\dots, v_n\}$ and $V(\overline{K_p})=\{v_{n+1},\dots, v_{n+p}\}$. Then $V(M(K_{1,n}+\overline{K_p}))=\{v_0,\dots\ v_{n+p}\}\cup \mathcal{M}$, where $\mathcal{M}=\{ m_i~|~1\leq i \leq n \}\cup\{m_{i(n+j)}~|~0\le i\le n, 1\le j\le p\}$. Let $S$ be a dominating set of $M(K_{1,n}+\overline{K_p})$. By Lemma~\ref{lemma:dominationisalledges}, we can assume that $S\subset\mathcal{M}$. Since, if $1\le i<j\le n$, we have $(N_{M(K_{1,n}+\overline{K_p})}(v_i)\cap\mathcal{M})\cap(N_{M(K_{1,n}+\overline{K_p})}(v_j)\cap\mathcal{M})=\emptyset$, then $|S|\ge n$. This implies that $\gamma(M(K_{1,n}+\overline{K_p}))\ge n$. On the other hand, if we consider $S=\{m_{i(n+i)}~|~1\le i\le p\}\cup\{m_i~|~p+1\le i\le n\}$, then $S$ is a dominating set of $M(K_{1,n}+\overline{K_p})$ with $|S|=n$, and hence $\gamma(M(K_{1,n}+\overline{K_p}))\le n$.

\end{proof}

Next we calculate the domination number of double star graph $S_{1,n,n}$. Notice that the graph $S_{1,n,n}$ is important because it is an example of a non-complete bipartite graph.
\begin{Definition}
A double star graph $S_{1,n,n}$ is obtained from the star graph $K_{1,n}$ by replacing every edge with a path of length $2$. 
\end{Definition}

\begin{figure}[th]
\centering
\begin{tikzpicture}
\draw (-1,0) node[v](1){}; 
\draw (-0.5,0) node[v](2){}; 
\draw (0,0) node[v](3){};
\draw (0.5,0) node[v](4){};  
\draw (1,0) node[v](5){};  
\draw (0,1) node[v](6){}; 
\draw (0,0.5) node[v](7){}; 
\draw (0,-0.5) node[v](8){};  
\draw (0,-1) node[v](9){}; 
\draw (1)--(2)--(3)--(4)--(5);
\draw (6)--(7)--(3)--(8)--(9);
\end{tikzpicture}
\caption{The double star graph $S_{1,4,4}$}\label{Fig:double star}
\end{figure}
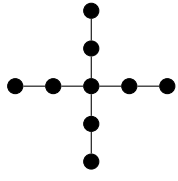

\begin{Proposition}\label{prop:mindomindoublestar}
For any double star graph $S_{1,n,n}$ on $2n+1$ vertices, with $n\ge 2$, we have $$\gamma(M(S_{1,n,n}))=n+1.$$
\end{Proposition}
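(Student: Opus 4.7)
The plan is to bound $\gamma(M(S_{1,n,n}))$ from above and below by $n+1$. I first fix notation consistent with the paper: let $V(S_{1,n,n})=\{v_0,v_1,\dots,v_n,v_{n+1},\dots,v_{2n}\}$ with $v_0$ the central vertex, and $E(S_{1,n,n})=\{v_0v_i~|~1\le i\le n\}\cup\{v_iv_{n+i}~|~1\le i\le n\}$, so that $v_{n+i}$ is the leaf attached to the middle vertex $v_i$. Then $\mathcal{M}=\{m_{0i}~|~1\le i\le n\}\cup\{m_{i(n+i)}~|~1\le i\le n\}$ splits into ``inner'' edge-vertices adjacent to $v_0$ and ``outer'' edge-vertices adjacent to the leaves.

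For the upper bound I would exhibit the explicit set $S=\{m_{01}\}\cup\{m_{i(n+i)}~|~1\le i\le n\}$ of cardinality $n+1$ and verify directly that it dominates $M(S_{1,n,n})$: the center $v_0$ is dominated by $m_{01}$; each middle vertex $v_i$ and each leaf $v_{n+i}$ is dominated by $m_{i(n+i)}$; every inner edge-vertex $m_{0j}$ with $j\neq 1$ is adjacent to $m_{01}$ inside the line-graph part of $M(S_{1,n,n})$ because the edges $v_0v_1$ and $v_0v_j$ share $v_0$; and the outer edge-vertices already lie in $S$.

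For the lower bound let $S$ be any dominating set of $M(S_{1,n,n})$. By Lemma~\ref{lemma:dominationisalledges} I may assume $S\subseteq\mathcal{M}$. Each leaf $v_{n+i}$ has a unique neighbor in $M(S_{1,n,n})$, namely $m_{i(n+i)}$, so all $n$ outer edge-vertices are forced into $S$. Since in $S_{1,n,n}$ each such $m_{i(n+i)}$ corresponds to an edge incident only to $v_i$ and $v_{n+i}$, none of these forced vertices dominates $v_0$ in $M(S_{1,n,n})$. Hence at least one additional (necessarily inner) edge-vertex $m_{0j}$ must also belong to $S$, giving $|S|\ge n+1$.

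Combining the two bounds yields $\gamma(M(S_{1,n,n}))=n+1$. I do not expect any real obstacle; the only care required is the adjacency bookkeeping for the lower bound. As a sanity check, the same conclusion also follows directly from Theorem~\ref{theo:mindomandcovernumber}, since any edge cover of $S_{1,n,n}$ must contain every pendant edge $v_iv_{n+i}$ to cover its unique leaf, after which a single additional edge $v_0v_j$ is needed to cover $v_0$, so $\rho(S_{1,n,n})=n+1$.
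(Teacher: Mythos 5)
Your proposal is correct, but your lower bound argument differs from the paper's. For the upper bound both you and the paper exhibit an explicit set of size $n+1$ consisting of the $n$ pendant edge-vertices plus one extra element; the paper adds the central vertex $v_0$ itself, while you add the edge-vertex $m_{01}$ — both choices dominate everything, and yours has the minor aesthetic advantage of staying inside $E(G)$. For the lower bound the paper simply invokes the general bound $\gamma(M(G))\ge\lceil\frac{n}{2}\rceil$ of Theorem~\ref{theo:mindominconnectgraph}, which for $2n+1$ vertices already gives $n+1$ with no further work. You instead give a direct structural argument: after reducing to $S\subseteq\mathcal{M}$ via Lemma~\ref{lemma:dominationisalledges}, each leaf forces its unique neighboring edge-vertex $m_{i(n+i)}$ into $S$, and since none of these is adjacent to $v_0$ one more element is needed. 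Your argument is longer but more informative — it sharpens the leaf bound of Proposition~\ref{prop:mindomintreeleaf} from $n$ to $n+1$ and would extend verbatim to unbalanced double stars, where the parity count $\lceil\frac{n}{2}\rceil$ need not be tight. Your closing observation that the result also follows from $\gamma(M(G))=\rho(G)$ (Theorem~\ref{theo:mindomandcovernumber}) together with $\rho(S_{1,n,n})=n+1$ is a valid third route not taken by the paper.
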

\begin{proof} 
To fix the notation, assume $V(S_{1,n,n})=\{v_0,v_1,\dots, v_{2n}\}$ and $E(S_{1,n,n})=\{v_0v_i,v_iv_{n+i}~|~1\leq i \leq n\}$. Then $V(M(S_{1,n,n}))=V(S_{1,n,n})\cup \mathcal{M}$, where $\mathcal{M}=\{ m_{i},m_{i(n+i)}~|~1\leq i \leq n \}$. 

If $S=\{m_{i(n+i)}~|~1\leq i \leq n\}\cup \{v_{0}\}$, then $S$ is a dominating set of $M(S_{1,n,n})$ with $|S|=n+1$, and hence $\gamma(M(S_{1,n,n}))\le n+1$. On the other hand, by Theorem~\ref{theo:mindominconnectgraph}, $\gamma(M(S_{1,n,n}))\ge \lceil\frac{2n+1}{2}\rceil=n+1$.

\end{proof}

\begin{Proposition}\label{prop:mindominpath}
\label{gamma_{t}(M(P_n))}
For any path $P_n$ of order $n\geq 2$, we have
$$\gamma(M(P_n))=\lceil \frac{n}{2} \rceil.$$
\end{Proposition}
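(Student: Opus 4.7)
The plan is to bracket $\gamma(M(P_n))$ from below and above by $\lceil n/2 \rceil$, using a previously established general lower bound on one side and an explicit construction on the other. Since $P_n$ for $n\ge 2$ is connected with no isolated vertices, Theorem~\ref{theo:mindominconnectgraph} immediately gives $\gamma(M(P_n))\ge \lceil n/2 \rceil$, so the entire work is concentrated on exhibiting a dominating set of exactly this size.

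For the upper bound I would fix the notation $V(P_n)=\{v_1,\dots,v_n\}$ with edges $v_iv_{i+1}$, so $\mathcal{M}=\{m_{i,i+1}\mid 1\le i\le n-1\}$. I would then take
$$S=\{m_{2i-1,2i}\mid 1\le i\le \lfloor n/2\rfloor\}\cup \begin{cases}\emptyset,&n\text{ even},\\ \{m_{n-1,n}\},&n\text{ odd},\end{cases}$$
which has cardinality $\lceil n/2 \rceil$. The verification is routine: the pair $\{v_{2i-1},v_{2i}\}$ is dominated by the edge-vertex $m_{2i-1,2i}\in S$, an intermediate edge-vertex $m_{2i,2i+1}\notin S$ is adjacent in $M(P_n)$ to $m_{2i-1,2i}\in S$ because they share the endpoint $v_{2i}$, and in the odd case $v_n$ is dominated by the extra element $m_{n-1,n}$.

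As an alternative (and perhaps cleaner) route, I would simply invoke Theorem~\ref{theo:mindomandcovernumber}, which reduces the problem to showing $\rho(P_n)=\lceil n/2\rceil$. An edge cover must contain at least $\lceil n/2\rceil$ edges because each edge covers only two vertices, and the explicit edge set above is an edge cover of $P_n$ of that size, settling the count.

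There is no real obstacle here: the lower bound is a direct application of Theorem~\ref{theo:mindominconnectgraph} (or of Theorem~\ref{theo:mindomandcovernumber} together with the trivial inequality $\rho(P_n)\ge \lceil n/2\rceil$), and the upper bound is an explicit matching-style construction whose only subtlety is the parity split for $n$ odd, handled by adjoining the single extra edge-vertex $m_{n-1,n}$.
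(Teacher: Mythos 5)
Your proof is correct and follows essentially the same route as the paper: the same alternating edge-vertex set $\{m_{12},m_{34},\dots\}$ with the extra $m_{(n-1)n}$ in the odd case for the upper bound, and a previously proved general lower bound of $\lceil n/2\rceil$ for the other direction (the paper cites Theorem~\ref{theo:mindomintree} for trees rather than Theorem~\ref{theo:mindominconnectgraph}, but this is immaterial). The edge-cover alternative you mention is a valid shortcut the paper does not take here, but it changes nothing of substance.
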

\begin{proof} To fix the notation, assume $V(P_n)=\{ v_1, \dots, v_n\}$ and $E(P_n)=\{v_1v_2, v_2v_3,\dots,v_{n-1}v_n\}$. Then $V(M(P_n))=V(P_n)\cup \mathcal{M}$, where $\mathcal{M}=\{ m_{i(i+1)}~|~1\leq i \leq n-1 \}$. 

Assume that $n$ is even and consider $S=\{m_{12},m_{34},\dots, m_{(n-1)n}\}$. Then $S$ is a dominating set for $M(P_n)$ with $|S|=\frac{n}{2}$. Similarly, if $n$ is odd, consider $S=\{m_{12},m_{34},\dots, m_{(n-2)(n-1)}, m_{(n-1)n}\}$. Then $S$ is a dominating set for $M(P_n)$ with $|S|=\frac{n-1}{2}+1=\lceil \frac{n}{2} \rceil$. This shows that $\gamma(M(P_n))\le\lceil \frac{n}{2} \rceil$. On the other hand, by Theorem~\ref{theo:mindomintree}, $\gamma(M(P_n))\ge\lceil \frac{n}{2} \rceil$.

%

\end{proof}

\begin{Remark} Since the star graphs and the path graphs are examples of trees, by Propositions~\ref{prop:mindominstar} and \ref{prop:mindominpath}, the inequalities of Theorems~\ref{theo:mindomintree} and \ref{theo:mindominconnectgraph} are all sharp.
\end{Remark}

Using Theorem~\ref{theo:mindominconnectgraph} and Proposition~\ref{prop:mindominpath} we obtain the following result.
\begin{Theorem}\label{theo:mindominsidepath}
Let $G$ be a graph with $n\ge2$ vertices. If $G$ has a subgraph isomorphic to $P_n$, then $$\gamma(M(G))=\lceil\frac{n}{2}\rceil.$$
\end{Theorem}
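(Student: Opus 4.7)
The plan is to establish the two inequalities separately. Since $G$ admits a subgraph isomorphic to $P_n$ on the full vertex set, this subgraph is necessarily a Hamiltonian path of $G$, so $G$ has no isolated vertices. The lower bound $\gamma(M(G))\ge\lceil n/2\rceil$ is then immediate from Theorem~\ref{theo:mindominconnectgraph}.

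For the upper bound, let $H\subseteq G$ be a fixed Hamiltonian path. I would take the explicit dominating set $S$ of $M(H)$ constructed in the proof of Proposition~\ref{prop:mindominpath}: it has cardinality $\lceil n/2\rceil$ and consists entirely of edges of $H$. Since $E(H)\subseteq E(G)$, this $S$ sits naturally as a subset of $V(M(G))$, and the whole task is to check that it still dominates $M(G)$. Any original vertex $v_i\in V(G)$ is dominated in $M(H)$, so $S$ contains some edge $e$ of $H$ incident to $v_i$, and by definition of $M(G)$ this same $e$ is adjacent to $v_i$ in $M(G)$ as well. For any vertex $m_{ij}\in V(M(G))$ corresponding to an edge $v_iv_j\in E(G)$ (possibly not in $H$), the edge $e\in S$ of $H$ incident to $v_i$ shares the endpoint $v_i$ with $v_iv_j$, so $m_e$ and $m_{ij}$ are adjacent in the induced subgraph $L(G)\subseteq M(G)$, and hence $m_{ij}$ is dominated by $e$. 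Combined with the lower bound, this gives equality.

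The proof has essentially no obstacle: the only ingredients are that $L(G)$ is an induced subgraph of $M(G)$, recorded in the introductory remarks on middle graphs, and the explicit form of the dominating set from Proposition~\ref{prop:mindominpath}, which visibly places an edge of $S$ at every vertex of $H$. The small point worth stating carefully is that domination of a new vertex $m_{ij}$ coming from an edge of $G\setminus H$ flows through the line-graph adjacency rather than through any edge of $H$ per se.
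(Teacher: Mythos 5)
Your proof is correct and follows essentially the same route as the paper: the lower bound is quoted from Theorem~\ref{theo:mindominconnectgraph}, and the upper bound is obtained by transporting the edge dominating set of the middle graph of the spanning path (Proposition~\ref{prop:mindominpath}, via Lemma~\ref{lemma:dominationisalledges}) into $M(G)$. The paper compresses your explicit verification---that every vertex of $G$ is covered because the path is spanning, and that every edge-vertex $m_{ij}$ of $M(G)$ is reached through the line-graph adjacency at a shared endpoint---into the single phrase ``by construction, $S$ is also a dominating set of $M(G)$'', so you have merely made explicit what the paper leaves implicit.
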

\begin{proof} By Theorem~\ref{theo:mindominconnectgraph}, we only need to show that $\gamma(M(G))\le\lceil\frac{n}{2}\rceil$.
By assumption, $G$ has $G'$ a subgraph isomorphic to $P_n$. By Lemma~\ref{lemma:dominationisalledges} and Proposition~\ref{prop:mindominpath}, there exists $S\subseteq E(G')\subseteq E(G)$ a minimal dominating set of $M(G')$ with $|S|=\lceil\frac{n}{2}\rceil$. By construction, $S$ is also a dominating set of $M(G)$, and hence $\gamma(M(G))\le\lceil\frac{n}{2}\rceil$.
\end{proof}

Notice that in general the opposite implication of Theorem~\ref{theo:mindominsidepath} is false, in fact we have the following example.
\begin{Example} Let $G$ be the graph in Figure~\ref{Fig:tree}. Then a direct computation shows that $\gamma(M(G))=3=\lceil\frac{5}{2}\rceil$, but $G$ has no subgraphs isomorphic to $P_5$.

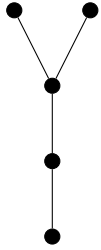
\begin{figure}[th]
\centering
\begin{tikzpicture}
\draw (0.5,0) node[v](1){}; 
\draw (0.5,1) node[v](2){}; 
\draw (0.5,2) node[v](3){};
\draw (0,3) node[v](4){};  
\draw (1,3) node[v](5){};  
\draw (1)--(2)--(3)--(4);
\draw (3)--(5);
\end{tikzpicture}
\caption{A tree on $5$ vertices}\label{Fig:tree}
\end{figure}

\end{Example}

Since any cycle graph $C_n$, any wheel graph $W_n$ and any complete graph $K_n$ contain a subgraph isomorphic to $P_n$, Theorem~\ref{theo:mindominsidepath} gives us the following result.

\begin{Corollary}\label{coroll:mindominfamilies} Let $n\ge 3$. Then 
$$\gamma(M(C_n))=\gamma(M(W_n))=\gamma(M(K_n))=\lceil\frac{n}{2}\rceil.$$
\end{Corollary}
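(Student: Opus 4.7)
The plan is to invoke Theorem~\ref{theo:mindominsidepath} directly: it suffices to verify that each of $C_n$, $W_n$, and $K_n$ is a graph of order $n$ that contains a subgraph isomorphic to $P_n$, i.e.\ admits a Hamiltonian path. Once this is checked, the equality $\gamma(M(G)) = \lceil n/2 \rceil$ is immediate from that theorem.

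First I would handle $C_n$: labelling its vertices cyclically as $v_1,\dots,v_n$, the path $v_1 v_2 \cdots v_n$ obtained by deleting the single edge $v_n v_1$ is a copy of $P_n$ sitting inside $C_n$. For $K_n$, any ordering $v_1,\dots,v_n$ of the vertex set yields a Hamiltonian path since every pair of vertices is adjacent, so $P_n$ is again a subgraph. For $W_n$, under the convention that $W_n$ is the wheel on $n$ vertices (a central hub joined to a cycle $C_{n-1}$), one can take a Hamiltonian path by starting at the hub, moving to any rim vertex, and then traversing the rim cycle; this produces a $P_n$-subgraph.

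With all three containments established, Theorem~\ref{theo:mindominsidepath} gives $\gamma(M(C_n)) = \gamma(M(W_n)) = \gamma(M(K_n)) = \lceil n/2 \rceil$, completing the proof.

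There is no real obstacle here; the only mild subtlety is fixing the convention for $W_n$ so that it has order $n$ (rather than $n+1$), which is needed for the application of Theorem~\ref{theo:mindominsidepath} to yield the stated value $\lceil n/2 \rceil$ rather than $\lceil (n+1)/2 \rceil$.
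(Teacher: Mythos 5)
Your proposal is correct and matches the paper's proof exactly: the paper likewise observes that $C_n$, $W_n$ and $K_n$ each contain a subgraph isomorphic to $P_n$ and then applies Theorem~\ref{theo:mindominsidepath}. Your explicit construction of the Hamiltonian paths and your remark about the order convention for $W_n$ are just slightly more detailed versions of what the paper leaves implicit.
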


As a consequence of Theorem~\ref{theo:mindominsidepath}, we can also compute the domination number of the middle graph of the join of a graph with $\overline{K_p}$, for several known families. 

\begin{Corollary}\label{coroll:mindominjoinfamilspecial} Let $n\ge3$ and $p<n$. If $G$ is a path graph $P_n$, a cycle graph $C_n$, a wheel graph $W_n$ or a complete graph $K_n$, then 
$$\gamma(M(G+\overline{K_p}))= \lceil\frac{n+p}{2}\rceil.$$
\end{Corollary}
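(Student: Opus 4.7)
The plan is to combine three earlier results. Theorem~\ref{theo:mindominjoinpsmallineq} already supplies the lower bound $\lceil \frac{n+p}{2}\rceil \le \gamma(M(G+\overline{K_p}))$, so only the matching upper bound needs to be established. For that, I would invoke Theorem~\ref{theo:mindominjoinpsmallequalmin}, which gives the exact formula
\[
\gamma(M(G+\overline{K_p})) = p + \min\{\gamma(M(G[A])) \mid A\subseteq V(G),\ |A|=n-p\}.
\]
Because $p + \lceil \frac{n-p}{2}\rceil = \lceil \frac{n+p}{2}\rceil$, it suffices to exhibit a single $A\subseteq V(G)$ with $|A|=n-p$ and $\gamma(M(G[A])) \le \lceil \frac{n-p}{2}\rceil$.

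Such an $A$ is provided by Theorem~\ref{theo:mindominsidepath} as soon as $G[A]$ contains a subgraph isomorphic to the path $P_{n-p}$. (The degenerate case $n-p=1$ needs no separate argument, as a single vertex already satisfies $\gamma(M(G[A]))=1=\lceil\frac{1}{2}\rceil$.) So the proof reduces to a purely structural check, one family at a time, that $n-p$ vertices can be chosen to induce a subgraph containing a Hamiltonian path. For $P_n$, take $A$ to be any $n-p$ consecutive vertices, so $G[A]=P_{n-p}$. For $C_n$, remove $p$ consecutive vertices and the remaining $n-p$ again induce $P_{n-p}$. For $K_n$, every $(n-p)$-subset induces $K_{n-p}$, which trivially contains $P_{n-p}$. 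For $W_n$, writing the hub as $v_0$ and the rim cycle as $v_1v_2\cdots v_{n-1}v_1$, I would take $A=\{v_0,v_1,\ldots,v_{n-p-1}\}$; the sequence $v_0,v_1,\ldots,v_{n-p-1}$ is then a Hamiltonian path of $G[A]$ via the hub--rim edge $v_0v_1$ and the consecutive rim edges.

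I do not foresee a serious obstacle: all the analytic work was done in Section~2, and the corollary amounts to the observation that each of the listed families is ``path-rich'' enough to saturate Theorem~\ref{theo:mindominjoinpsmallequalmin}. The one spot requiring a moment of care is the wheel, where the chosen $A$ must retain enough structure --- here the hub together with consecutive rim vertices --- to guarantee a spanning path; removing the hub while also deleting too many rim vertices could destroy connectedness and force a larger induced domination number.
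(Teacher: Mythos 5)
Your proof is correct, but it takes a genuinely different route from the paper's. The paper's entire argument is one line: since each of $P_n$, $C_n$, $W_n$, $K_n$ has a Hamiltonian path and $p<n$, the $p$ vertices of $\overline{K_p}$ can be interleaved into that path, so $G+\overline{K_p}$ itself contains a spanning subgraph isomorphic to $P_{n+p}$, and Theorem~\ref{theo:mindominsidepath} applied directly to $G+\overline{K_p}$ gives the value $\lceil\frac{n+p}{2}\rceil$ in one stroke (upper and lower bound together). You instead route the upper bound through the exact formula of Theorem~\ref{theo:mindominjoinpsmallequalmin}, reducing to exhibiting a single $(n-p)$-subset $A$ with $G[A]$ traceable, and then verify this family by family; your identity $p+\lceil\frac{n-p}{2}\rceil=\lceil\frac{n+p}{2}\rceil$, your treatment of the degenerate case $n-p=1$, and your choices of $A$ (including the hub-plus-consecutive-rim choice for the wheel) are all sound. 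The paper's approach is shorter and uniform across the four families; yours is more work here but illustrates Theorem~\ref{theo:mindominjoinpsmallequalmin} in action and would still apply in situations where one can find a traceable induced subgraph on $n-p$ vertices without arguing traceability of the full join.
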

\begin{proof} Under our assumption, the graph $G+\overline{K_p}$ contains a subgraph isomorphic to $P_{n+p}$, and hence Theorem~\ref{theo:mindominsidepath} gives us the result.

\end{proof}

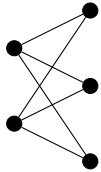
\begin{figure}[th]
\centering
\begin{tikzpicture}
\draw (0.5,1) node[v](1){}; 
\draw (-0.5,0.5) node[v](2){}; 
\draw (0.5,0) node[v](3){};
\draw (-0.5,-0.5) node[v](4){};  
\draw (0.5,-1) node[v](5){};  
\draw (1)--(2);
\draw (1)--(4);
\draw (3)--(2);
\draw (3)--(4);
\draw (5)--(2);
\draw (5)--(4);
\end{tikzpicture}
\caption{The complete bipartite graph $K_{2,3}$}\label{Fig:compbipartite}
\end{figure}

\begin{Proposition}\label{prop:mindomincompletebipartitegr}
Let $K_{n_1,n_2}$ be the complete bipartite graph with $n_2\geq n_1 \geq 1$. Then $$\gamma(M(K_{n_1,n_2}))= n_2.$$
\end{Proposition}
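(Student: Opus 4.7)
The cleanest route is to invoke Theorem~\ref{theo:mindomandcovernumber}: since $n_1, n_2 \geq 1$ the graph $K_{n_1,n_2}$ has no isolated vertex, so $\gamma(M(K_{n_1,n_2})) = \rho(K_{n_1,n_2})$, and the problem reduces to computing the edge cover number. I would however write the argument directly in terms of the middle graph to match the style of the surrounding propositions; the two approaches are interchangeable.

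For the upper bound, I would set up the standard notation by writing the two parts as $\{v_1,\dots,v_{n_1}\}$ and $\{v_{n_1+1},\dots,v_{n_1+n_2}\}$, so that $V(M(K_{n_1,n_2})) = V(K_{n_1,n_2}) \cup \mathcal{M}$ with $\mathcal{M} = \{m_{ij} \mid 1 \leq i \leq n_1 < j \leq n_1+n_2\}$. I would then exhibit the explicit set
\[
S = \{m_{i(n_1+i)} \mid 1 \leq i \leq n_1\} \cup \{m_{1(n_1+j)} \mid n_1+1 \leq j \leq n_2\},
\]
which has $|S| = n_1 + (n_2 - n_1) = n_2$. The first $n_1$ elements form a perfect matching of the smaller side and dominate $v_1,\dots,v_{n_1}$ and $v_{n_1+1},\dots,v_{2n_1}$; the remaining elements dominate $v_{2n_1+1},\dots,v_{n_1+n_2}$ by sharing $v_1$. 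Every $m_{ij} \in \mathcal{M}$ shares the endpoint $v_i$ with the element $m_{i(n_1+i)} \in S$, hence lies in its closed neighborhood in $M(K_{n_1,n_2})$.

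For the lower bound, the key observation is that the closed neighborhoods $N_{M(K_{n_1,n_2})}[v_{n_1+j}]$ for $1 \leq j \leq n_2$ are pairwise disjoint: each such neighborhood consists of $v_{n_1+j}$ together with the vertices $m_{i(n_1+j)}$ for $1 \leq i \leq n_1$, and no two distinct indices $j$ share any of these. Since a dominating set must meet each such closed neighborhood, we obtain $\gamma(M(K_{n_1,n_2})) \geq n_2$.

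I do not expect a real obstacle here: the construction side just uses the matching-plus-extra-edges idea familiar from König's/Gallai's theorem for bipartite edge covers, and the lower bound is an immediate disjoint-neighborhood count. The only small care required is verifying that every $m_{ij}$ in the middle graph is indeed dominated by the explicit $S$ above, which follows from the $L(G)$-edges in $M(K_{n_1,n_2})$.
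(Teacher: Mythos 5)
Your proof is correct and follows essentially the same route as the paper: a perfect matching on the smaller part plus $n_2-n_1$ extra edges through a single hub vertex for the upper bound, and the pairwise disjointness of the closed neighborhoods of the $n_2$ vertices in the larger part for the lower bound. The paper does not use the edge-cover reduction you mention as an alternative, but your written argument matches its direct construction (differing only in the choice of hub vertex).
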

\begin{proof}  To fix the notation, assume $V(K_{n_1,n_2})=\{v_1,\dots, v_{n_1},u_1,\dots, u_{n_2}\}$ and $E(K_{n_1,n_2})=\{v_iu_j ~|~1\leq i \leq n_1, 1\leq j \leq n_2\}$. Then $V(M(K_{n_1,n_2}))=V(K_{n_1,n_2})\cup \mathcal{M}$, where
 $\mathcal{M}=\{ m_{ij}~|~1\leq i \leq n_1, 1\leq j \leq n_2 \}. $
Consider  $S=\{m_{11},m_{22},\dots, m_{n_1n_1}\}\cup\{m_{n_1(n_1+1)}, m_{n_1(n_1+2)},\dots, m_{n_1n_2}\}$.
By construction, $S$ is a dominating set of $M(K_{n_1,n_2})$ and $|S|=n_1+n_2-n_1=n_2$. This implies that $\gamma(M(K_{n_1,n_2}))\le n_2$.

On the other hand, if $S$ is a dominating set for $M(K_{n_1,n_2})$, then it dominates $u_1,\dots, u_{n_2}$ that are all disconnected. This implies that $\gamma(M(K_{n_1,n_2}))\ge n_2$.
\end{proof}

Notice that if we consider the case when $n_1=1$, the previous result gives us Proposition~\ref{prop:mindominstar}.

\begin{Definition} The \emph{friendship} graph $F_n$ of order $2n+1$ is obtained by joining $n$ copies of the cycle graph $C_3$ with a common vertex.
\end{Definition}

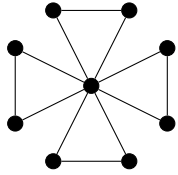
\begin{figure}[th]
\centering
\begin{tikzpicture}
\draw (0,0) node[v](1){}; 
\draw (-1,0.5) node[v](2){}; 
\draw (-1,-0.5) node[v](3){};
\draw (1,0.5) node[v](4){};  
\draw (1,-0.5) node[v](5){};  
\draw (-0.5,1) node[v](6){};  
\draw (0.5,1) node[v](7){}; 
\draw (-0.5,-1) node[v](8){};  
\draw (0.5,-1) node[v](9){}; 
\draw (1)--(2)--(3)--(1);
\draw (1)--(4)--(5)--(1);
\draw (1)--(6)--(7)--(1);
\draw (1)--(8)--(9)--(1);
\end{tikzpicture}
\caption{The friendship graph $F_{4}$}\label{Fig:friendship}
\end{figure}

\begin{Proposition}\label{prop:mindominfriendship}
Let $F_n$ be the friendship graph with $n\ge2$. Then $$\gamma(M(F_n))= n+1.$$
\end{Proposition}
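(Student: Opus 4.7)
The plan is to establish the two inequalities $\gamma(M(F_n))\ge n+1$ and $\gamma(M(F_n))\le n+1$ separately, with the lower bound being an immediate consequence of a previous result and the upper bound coming from an explicit construction.

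First, I would set up notation. Write $V(F_n)=\{v_0,v_1,\dots,v_{2n}\}$ where $v_0$ is the common (central) vertex, and for each $i=1,\dots,n$ the triangle $C_3^{(i)}$ consists of $v_0,v_{2i-1},v_{2i}$. Thus $E(F_n)=\{v_0v_{2i-1},v_0v_{2i},v_{2i-1}v_{2i}\mid 1\le i\le n\}$, and $V(M(F_n))=V(F_n)\cup\mathcal{M}$, where $\mathcal{M}=\{m_{0(2i-1)},m_{0(2i)},m_{(2i-1)(2i)}\mid 1\le i\le n\}$.

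For the lower bound, I would simply invoke Theorem~\ref{theo:mindominconnectgraph}: since $F_n$ is a connected graph of order $2n+1\ge 5$ without isolated vertices, we immediately get
\[
\gamma(M(F_n))\ge \left\lceil\frac{2n+1}{2}\right\rceil = n+1.
\]

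For the upper bound, I would propose the candidate
\[
S=\{m_{(2i-1)(2i)}\mid 1\le i\le n\}\cup\{v_0\},
\]
which has cardinality $n+1$, and verify that it dominates $M(F_n)$. The verification is short: the vertex $v_0\in S$ is trivially dominated; for each $i$, the vertices $v_{2i-1}$ and $v_{2i}$ are incident in $F_n$ to the edge $v_{2i-1}v_{2i}$, so they are adjacent in $M(F_n)$ to $m_{(2i-1)(2i)}\in S$; the edges $m_{0(2i-1)}$ and $m_{0(2i)}$ share a vertex in $F_n$ with $v_{2i-1}v_{2i}$, so the corresponding vertices of $M(F_n)$ are adjacent to $m_{(2i-1)(2i)}\in S$ via the line-graph edges; and finally $m_{(2i-1)(2i)}\in S$ is self-dominated. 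Hence $S$ is a dominating set of $M(F_n)$ and $\gamma(M(F_n))\le n+1$.

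There is no real obstacle here, since the lower bound is a direct application of Theorem~\ref{theo:mindominconnectgraph} and the upper bound is achieved by the natural choice of one ``outer'' edge from each triangle together with the hub $v_0$; the only thing one needs to be careful about is to include $v_0$ (or equivalently some edge incident to $v_0$) in the dominating set, since the $n$ outer edges alone leave $v_0$ undominated.
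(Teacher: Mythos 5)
Your proposal is correct and follows exactly the paper's argument: the lower bound $\gamma(M(F_n))\ge\lceil\frac{2n+1}{2}\rceil=n+1$ via Theorem~\ref{theo:mindominconnectgraph}, and the upper bound via the dominating set consisting of the $n$ outer edges of the triangles together with the central vertex $v_0$ (the paper writes this same set as $\{m_{i(i+1)}\mid 1\le i\le 2n-1,\ i \text{ odd}\}\cup\{v_0\}$). Your explicit verification that this set dominates $M(F_n)$ is slightly more detailed than the paper's, which simply asserts it.
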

\begin{proof} To fix the notation, assume $V(F_n)=\{v_0,v_1,\dots, v_{2n}\}$ and $E(F_n)=\{v_0v_1,v_0v_2,\dots, v_0v_{2n}\}\cup\{v_1v_2, v_3v_4,\dots,v_{2n-1}v_{2n}\}$. Then $V(M(F_n))=V(F_n)\cup \mathcal{M}$, where $\mathcal{M}=\{ m_i~|~1\leq i \leq 2n \}\cup\{ m_{i(i+1)}~|~1\leq i \leq 2n-1 \text{ and } i \text{ is odd}\}$. 

Consider $S=\{ m_{i(i+1)}~|~1\leq i \leq 2n-1 \text{ and } i \text{ is odd}\}\cup\{v_0\}$. Then $S$ is a dominating set for $M(F_n)$ with $|S|=n+1$, and hence, $\gamma(M(F_n))\le n+1$. On the other hand, by Theorem~\ref{theo:mindominconnectgraph}, $\gamma(M(F_n))\ge \lceil\frac{2n+1}{2} \rceil=n+1$.

\end{proof}

\section{Nordhaus-Gaddum relations}

In \cite{Nordhaus}, Nordhaus and Gaddum gave lower and upper bounds on the sum and the product of the chromatic number of a graph and its complement, in terms of the order of the graph. 
Since then, lower and upper bounds on the sum and the product of many other graph invariants, like domination and total domination numbers, have been proposed by several authors. See \cite{Aouchiche13} for a survey on the subject.


In this section, we study Nordhaus-Gaddum relations for the domination number of middle graphs.

%
%
%

\begin{Theorem}\label{theo:Nordgaddomin}
Let $G$ be a graph on $n\ge2$ vertices. Assume that the graphs $G$ and $\overline{G}$ have no isolated vertices. Then
$$2(n-2)\ge \gamma(M(G))+\gamma(M(\overline{G}))\ge 2\lceil\frac{n}{2}\rceil $$
and
$$(n-2)^2\ge \gamma(M(G))\cdot\gamma(M(\overline{G}))\ge (\lceil\frac{n}{2}\rceil)^2. $$
\end{Theorem}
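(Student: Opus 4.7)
The plan is to reduce the Nordhaus--Gaddum pair of inequalities to the per-graph sandwich
\[\lceil n/2\rceil \;\le\; \gamma(M(G))\;\le\; n-2\]
together with the same bounds for $\overline{G}$; once these are in hand, summing yields the sum inequality and multiplying yields the product inequality. So only the two per-graph sandwiches need to be established, and each one will follow directly from a result already proved in this section.

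For the lower bound $\gamma(M(G))\ge\lceil n/2\rceil$, I would simply invoke Theorem~\ref{theo:mindominconnectgraph}, whose only hypothesis ``no isolated vertices'' is exactly what we assumed; applying the same theorem to $\overline{G}$ takes care of the other lower bound. For the upper bound $\gamma(M(G))\le n-2$, I would split on the connectivity of $G$. If $G$ is disconnected then, having no isolated vertex, it consists of $k+1\ge 2$ components of order at least $2$, and the Remark immediately after Theorem~\ref{theo:mindominconnectgraph} gives $\gamma(M(G))\le n-1-k\le n-2$. If instead $G$ is connected, the Corollary immediately following Theorem~\ref{theo:mindominisn-1} yields $\gamma(M(G))\le n-2$ as soon as $G\ne K_{1,n-1}$.

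The main obstacle, modest but essential, lies in ruling out $G=K_{1,n-1}$ in the connected case: this is precisely where the second half of the hypothesis (that $\overline{G}$ has no isolated vertex) is decisive, since if $G$ were the star $K_{1,n-1}$ its center would be adjacent to every other vertex in $G$ and hence isolated in $\overline{G}$, contradicting the hypothesis. By symmetry the same dichotomy yields $\gamma(M(\overline{G}))\le n-2$, and the two desired inequalities are then produced by a one-line sum and a one-line product. Finally, the small cases $n\le 3$ can be dismissed as vacuous, since no graph on two or three vertices has both itself and its complement free of isolated vertices.
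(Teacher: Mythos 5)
Your proposal is correct and follows essentially the same route as the paper: the lower bounds come from Theorem~\ref{theo:mindominconnectgraph} applied to $G$ and $\overline{G}$, and the upper bound $n-2$ comes from excluding $G=K_{1,n-1}$ (whose center would be isolated in $\overline{G}$) so that the value $n-1$ is never attained. If anything, your version is slightly more careful than the paper's, since you explicitly treat the disconnected case via the remark after Theorem~\ref{theo:mindominconnectgraph} (the paper invokes Theorem~\ref{theo:mindominisn-1}, stated only for connected graphs of order $n\ge 4$, without comment) and you note that the cases $n\le 3$ are vacuous.
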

\begin{proof} By Theorem~\ref{theo:mindominisn-1}, if $\gamma(M(G))=n-1$, then $G=K_{1,n-1}$ and hence, $\overline{G}$ has an isolated vertex. Similarly, if $\gamma(M(\overline{G}))=n-1$, then $G$ has an isolated vertex. This fact implies that, under our assumptions, $(n-2)\ge \gamma(M(G))$ and $(n-2)\ge \gamma(M(\overline{G}))$. On the other hand, Theorem~\ref{theo:mindominconnectgraph} implies that  $\gamma(M(G))\ge \lceil\frac{n}{2}\rceil $ and $\gamma(M(\overline{G}))\ge \lceil\frac{n}{2}\rceil $.

\end{proof}

Notice that all the inequalities of Theorem~\ref{theo:Nordgaddomin} are sharp, in fact we have the following example.
\begin{Example}
Consider the graph $C_4$ in Figure~\ref{Fig:NGC4}. Then by Corollary~\ref{coroll:mindominfamilies}, we have $\gamma(M(C_4))=2$. On the other hand, $\overline{C_4}=2K_2$, i.e. it is two distinct copies of the graph $K_2$, and hence, by Corollary~\ref{coroll:mindominfamilies}, we have $\gamma(M(\overline{C_4}))=2$. Since $n=4$, then $4=\gamma(M(C_4))+\gamma(M(\overline{C_4}))=\gamma(M(C_4))\cdot\gamma(M(\overline{C_4}))=2(n-2)=2\lceil\frac{n}{2}\rceil=(n-2)^2=(\lceil\frac{n}{2}\rceil)^2$.
\end{Example}

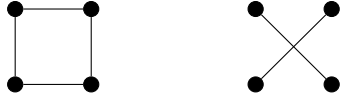
\begin{figure}[th]
\centering
\subfigure
{
\begin{tikzpicture}
\draw (0.5,0.5) node[v](1){}; 
\draw (-0.5,0.5) node[v](2){}; 
\draw (-0.5,-0.5) node[v](3){};
\draw (0.5,-0.5) node[v](4){};  
\draw (1)--(2)--(3)--(4)--(1);
\end{tikzpicture}
}
\hspace{14mm}
\subfigure
{
\begin{tikzpicture}
\draw (0.5,0.5) node[v](1){}; 
\draw (-0.5,0.5) node[v](2){}; 
\draw (-0.5,-0.5) node[v](3){};
\draw (0.5,-0.5) node[v](4){};  
\draw (1)--(3);
\draw (2)--(4);
\end{tikzpicture}
}
\caption{The graphs $C_4$ and $\overline{C_4}$}\label{Fig:NGC4}
\end{figure}

Notice that the results of Section 3 allow us to compute exactly $\gamma(M(G))+\gamma(M(\overline{G}))$ and $\gamma(M(G))\cdot\gamma(M(\overline{G}))$ for several graphs, as the following examples show.

\begin{Example} Consider the graph $G=K_{1,n}$ with $n+1\ge2$. Then $\overline{G}=K_n\cup\overline{K_1}$. By Proposition~\ref{prop:mindominstar} and Corollary~\ref{coroll:mindominfamilies}, $\gamma(M(G))+\gamma(M(\overline{G}))=n+\lceil\frac{n}{2}\rceil+1$ and $\gamma(M(G))\cdot\gamma(M(\overline{G}))=n(\lceil\frac{n}{2}\rceil+1)$.
\end{Example}

\begin{Example} Consider the graph $G=K_n$ with $n\ge2$. Then by Corollary~\ref{coroll:mindominfamilies}, $\gamma(M(G))+\gamma(M(\overline{G}))=\lceil\frac{n}{2}\rceil+n$ and $\gamma(M(G))\cdot\gamma(M(\overline{G}))=n(\lceil\frac{n}{2}\rceil)$.
\end{Example}

\begin{Example} Consider the graph $G=P_n$ with $n\ge2$. The graph $\overline{G}$ has a subgraph isomorphic to $P_n$. By Proposition~\ref{prop:mindominpath} and Theorem~\ref{theo:mindominsidepath}, $\gamma(M(G))+\gamma(M(\overline{G}))=2(\lceil\frac{n}{2}\rceil)$ and $\gamma(M(G))\cdot\gamma(M(\overline{G}))=(\lceil\frac{n}{2}\rceil)^2$.
\end{Example}

%
%

\paragraph{\textbf{Acknowledgements}} During the preparation of this article the fourth author was supported by JSPS Grant-in-Aid for Early-Career Scientists (19K14493).


\end{document}